\newtheorem{theorem}{Theorem}[section]
\newtheorem{lemma}[theorem]{Lemma}
\newtheorem{corollary}[theorem]{Corollary}
\newtheorem{conjecture}[theorem]{Conjecture}
\theoremstyle{definition}
\newtheorem{definition}[theorem]{Definition}
\theoremstyle{remark}
\newtheorem{remark}[theorem]{Remark}
\numberwithin{equation}{section}
\numberwithin{theorem}{section}
\DeclareMathOperator{\vol}{vol}
\DeclareMathOperator{\diam}{diam}
\DeclareMathOperator{\Ric}{Ric}
\DeclareMathOperator{\KE}{KE} \DeclareMathOperator{\GKE}{GKE}
\DeclareMathOperator{\reg}{reg} \DeclareMathOperator{\WP}{WP}
\begin{document}

\title{Relative volume comparison of Ricci Flow and its applications}

\author{
Gang Tian\thanks{Supported partially by
NSF grants DMS-1309359, DMS-1607091 and NSFC grant 11331001. Email: tian@math.princeton.edu}\\
Beijing University
\\[5pt]
Zhenlei Zhang\thanks{Supported partially by NSFC grants 11431009, 11771301 and Support Project of High-level Teachers in Beijing. Email: zhleigo@aliyun.com}\\
Capital Normal University}
\date{}

\maketitle

\begin{abstract}
In this paper, we derive a relative volume comparison estimate along Ricci flow and apply it to studying the Gromov-Hausdorff convergence of K\"ahler-Ricci flow on a minimal manifold. This new estimate generalizes Perelman's no local collapsing estimate and can be regarded as an analogue of the Bishop-Gromov volume comparison for Ricci flow.
\end{abstract}

\tableofcontents

\section{Introduction}

In this paper, we establish a relative volume comparison for Ricci flow. This volume comparison is a refinement of Perelman's no local collapsing estimate in \cite{Pe02}. The major advantage is that this volume comparison does not require any non-collapsing conditions on initial metrics, so our new estimate, like the Bishop-Gromov relative volume comparison estimate on a manifold with Ricci curvature bounded from below, can be applied to studying Ricci flow with a collapsing structure. As one special application, we use the relative volume comparison to studying the Gromov-Hausdorff convergence of K\"ahler-Ricci flow on a K\"ahler manifold with semi-ample canonical line bundle and positive Kodaira dimension; see Theorem \ref{KRF: minimal surface} below. In case of Kodaira dimension 1 with toric fibration, we confirm a very important part of the Analytic Minimal Model Program (AMMP); see \cite{Ti08} \cite{SoTi09} for a description of the AMMP. In particular, we solved a conjecture on the convergence of K\"ahler-Ricci flow on minimal elliptic surfaces, which was proposed first in \cite{SoTi07} and reemphasised subsequently in \cite{Ti08} \cite{SoWe13} \cite{To15}.

The Ricci flow was introduced by Hamilton in 1982 \cite{Ha82}. It evolves metrics $g(t)$ on a manifold by
\begin{equation}\label{Ricci flow}
\frac{\partial}{\partial t}g(t)\,=\,-2\Ric(g(t)).
\end{equation}
The no local collapsing theorem of Perelman is crucial for his celebrated works on the Poincar\'e conjecture and geometrization of 3-manifolds \cite{Pe02, Pe03}. It is used to ruling out the possibility that Ricci flow may form finite-time singularity around a surface of positive genus on any closed 3-manifolds, as well as proving many other results where the convergence of Ricci flow is involved. Perelman introduced two new tools, $\mathcal{W}$-entropy and reduced volume, to prove his no local collapsing theorems; see the corresponding Theorem 4.1 and Theorem 8.2 in \cite{Pe02} respectively. Though his second theorem is conceptually weaker, it is more powerful in applications since it holds in a uniform way. Let us recall the theorem.

\begin{theorem}[Perelman \cite{Pe02}]\label{no local collapsing}
For any $A>0$ and dimension $m$ there exists $\kappa=\kappa(m,A)>0$ with the following property. If $g(t)$, $0\le t\le r_0^2$, is a smooth solution to the Ricci flow on an $n$-manifold which has $|Rm|(x,t)\le r_0^{-2}$ for all $(x,t)$ satisfying $d_0(x,x_0)\le r_0$, and the volume of the metric ball $B_{g(0)}(x_0,r_0)$ is at least $A^{-1}r_0^n$, then, for any metric ball $B_{g(r_0^2)}(x,r)\subset B_{g(r_0^2)}(x_0,Ar_0)$ such that $r\le r_0$ and
\begin{equation}\label{parabolic curvature}
|Rm|(y,t)\le r^{-2},\,\mbox{ for all }(y,t)\in B_{g(r_0^2)}(x,r)\times[r_0^2-r^2,r_0^2],
\end{equation}
one has
\begin{equation}\label{volume}
\vol_{g(r_0^2)}\big(B_{g(r_0^2)}(x,r)\big)\,\ge\,\kappa r^m.
\end{equation}
\end{theorem}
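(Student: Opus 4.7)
The plan is to exploit Perelman's reduced volume $\widetilde V(\tau)$ based at the terminal spacetime point $(x,r_0^2)$ together with its monotonicity along the Ricci flow. Writing $\tau=r_0^2-t$ for the backward time parameter and letting $\ell(q,\tau)$ be the reduced distance from $(x,r_0^2)$, the reduced volume
\[
\widetilde V(\tau)\,=\,\int_M(4\pi\tau)^{-n/2}\,e^{-\ell(q,\tau)}\,dV_{g(r_0^2-\tau)}(q)
\]
is non-increasing in $\tau$. The strategy is to derive a small upper bound for $\widetilde V$ at small $\tau$ from the hypothetical collapsing of $B_{g(r_0^2)}(x,r)$, a definite lower bound for $\widetilde V(r_0^2)$ from the initial volume hypothesis, and then play these off one another via monotonicity.

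For the upper bound, suppose $\vol_{g(r_0^2)}\bigl(B_{g(r_0^2)}(x,r)\bigr)<\epsilon r^n$ with $\epsilon$ small. The parabolic curvature bound \eqref{parabolic curvature} forces $g(t)$ to be uniformly equivalent to a flat model on $B_{g(r_0^2)}(x,r)\times[r_0^2-r^2,r_0^2]$, so comparison with the Euclidean reduced distance yields $\ell(q,\tau)\ge c_n\,d_{g(r_0^2)}(q,x)^2/\tau-C_n$ inside that neighborhood for $\tau\le r^2$, while $\ell(q,\tau)\ge c_n r^2/\tau-C_n$ outside, since any competing $L$-geodesic must exit the parabolic neighborhood by traversing $g$-distance at least $r$. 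Splitting $\widetilde V(r^2/2)$ into the contributions from inside and outside and inserting the volume bound, I would obtain an inequality of the form $\widetilde V(r^2/2)\le C_n\,\epsilon^{1/2}+C_n e^{-c_n}$ that can be made arbitrarily small by choosing $\epsilon$ small.

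The heart of the proof---and the main obstacle---is the matching lower bound $\widetilde V(r_0^2)\ge c(n,A)>0$ from only the initial curvature and volume data. The difficulty is that $x$ may lie at $g(r_0^2)$-distance up to $Ar_0$ from $x_0$ while no curvature information is available along paths that stray outside $B_{g(0)}(x_0,r_0)$, so a direct construction of short $L$-geodesics from $(x,r_0^2)$ to $B_{g(0)}(x_0,r_0)$ is not at hand. My plan is to invoke Perelman's general fact that $\min_M\ell(\cdot,\tau)\le n/2$ for every $\tau$, combined with the Hamilton--Perelman differential inequalities for $\ell$ and the curvature bound on $B_{g(0)}(x_0,r_0)\times[0,r_0^2]$, to extract a subset of $B_{g(0)}(x_0,r_0)$ of $g(0)$-measure bounded below by $c(n,A)r_0^n$ on which $\ell(\cdot,r_0^2)\le C(n,A)$. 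The $L$-Jacobian comparison then bounds the integrand $(4\pi r_0^2)^{-n/2}e^{-\ell}$ below on this subset, and the initial volume hypothesis $\vol_{g(0)}\bigl(B_{g(0)}(x_0,r_0)\bigr)\ge A^{-1}r_0^n$ converts this into $\widetilde V(r_0^2)\ge c(n,A)$.

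Monotonicity finally gives $\widetilde V(r^2/2)\ge\widetilde V(r_0^2)\ge c(n,A)$, which contradicts the small upper bound unless $\epsilon\ge\kappa$ for some $\kappa=\kappa(n,A)>0$, yielding the non-collapsing estimate \eqref{volume}.
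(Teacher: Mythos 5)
The paper does not prove this statement at all---it is Perelman's Theorem 8.2, quoted with attribution and only the one-line remark that ``Perelman used the monotonicity of reduced volume along the Ricci flow to prove this theorem.'' Your proposal reproduces exactly that approach: reduced volume $\widetilde V$ based at the terminal point, an upper bound on $\widetilde V$ at scale $r$ from the hypothetical collapsing, a lower bound on $\widetilde V(r_0^2)$ from $\min_M\ell\le n/2$ together with the distance-derivative estimate (Lemma~8.3 of \cite{Pe02}) and the $L$-Jacobian comparison, and monotonicity to bridge the two; this is a correct sketch of Perelman's argument, so there is nothing to compare against in the paper itself. It is worth noting, though, that the paper's own new result, Theorem~\ref{relative noncollapsing}, is deliberately proved by a different mechanism (localized entropy and a conjugate-heat-kernel lower bound) precisely because reduced volume cannot deliver a \emph{relative} volume comparison when the initial data may be collapsed.
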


Here, we denote by $d_t$ the distance function defined by $g(t)$.

Perelman used the monotonicity of reduced volume along the Ricci flow to prove this theorem. In \cite{ZhQi}, using the loacalized $\mathcal{W}$-entropy and a localized version of Perelman differential Harnack inequality, Q. Zhang proved a uniform local Sobolev inequality for the metric ball $B_{g(r_0^2)}(x_0,Ar_0)$; see Theorem 6.3.2 in \cite{ZhQi}. As a direct consequence, he showed that, in order to estimate the volume of a metric ball at time $t=r_0^2$ as in (\ref{volume}), instead of assuming the curvature condition (\ref{parabolic curvature}) on a parabolic domain as Perelman did, one just needs the scalar curvature estimate at the time slice $t=r_0^2$, namely,
\begin{equation}
R(y,r_0^2)\le r_0^{-2},\,\mbox{ for all }y\in B_{g(r_0^2)}(x,r).
\end{equation}
Recently, Wang \cite{Wa17} gave another proof of this improved no local collapsing theorem and applied it to studying K\"ahler-Ricci flow on smooth minimal models of general type. Let us recall his result as well as some history on K\"ahler-Ricci flow on a smooth minimal model of general type $X$. On such a manifold, one usually considers the normalized K\"ahler-Ricci flow
\begin{equation}
\frac{\partial}{\partial t}\omega(t)=-\Ric(\omega(t))-\omega(t)
\end{equation}
where $\omega(t)$ is the K\"ahler form associated to K\"ahler metric along the flow. It follows from a general criterion of Tian-Zhang \cite{TiZh06} that the flow has a global solution $\omega(t)$ on $X$ for all $t\ge 0$ (also see \cite{Ts}). As part of the Analytic Minimal Model Program (AMMP), it is conjectured in \cite{SoTi09}, \cite{So14} that the K\"ahler-Ricci flow converges in the Gromov-Hausdorff topology to the canonical K\"ahler-Einstein metric on the canonical model $X_{can}$. It follows from the work of Tsuji \cite{Ts}, Tian-Zhang \cite{TiZh06} and Eyssidieux-Guedj-Zeriahi \cite{EGZ09} that there is a unique K\"ahler-Einstein current $\omega_{\KE}$. It is proved by Song \cite{So14} that $\omega_{\KE}$ does define a metric on $X_{can}$. There are many results on the limit current. When $c_1(X)<0$, Cao \cite{Ca85} proved the smooth convergence of the K\"ahler-Ricci flow to the unique K\"ahler-Einstein metric on $X$; for general $X$, Tsuji \cite{Ts} and Tian-Zhang \cite{TiZh06} proved that the K\"ahler-Ricci flow converges to the canonical K\"ahler-Einstein current $\omega_{\KE}$ in the smooth topology outside the non-ample locus of the canonical class $K_X$; when the (complex) dimension is $\le 3$, the authors \cite{TiZh15} proved the Gromov-Hausdorff convergence, so solved the conjecture in low dimensions (also see \cite{GuSoWe15} for an independent proof for minimal surfaces of general type); Guo \cite{Gu15} proved the geometric convergence under additional assumption of lower bounded Ricci curvature. We remark that the work \cite{TiZh15} \cite{Gu15} depends highly on Song's approach to bounding diameter of $\omega_{\KE}$ on the canonical model $X_{can}$ \cite{So14}. Finally, based on the work of Tsuji \cite{Ts} and Tian-Zhang \cite{TiZh06} on local $C^\infty$ convergence and Zhang's scalar curvature bound \cite{ZhZ09} and Song's diameter bound of the limit space, Wang proved in \cite{Wa17} that the diameters of $\omega(t)$ are uniformly bounded under the K\"ahler-Ricci flow in all dimensions. It implies the Gromov-Hausdorff convergence of $(X,\omega(t))$, at least along any subsequences. However, it is not obvious from Wang's work that the Gromov-Hausdorff limit coincides with the canonical model. In general, the problem to identify the Gromov-Hausdorff limit and the canonical model is highly nontrivial. In case of minimal K\"ahler surfaces one can use the fact that the singularities on the canonical model are finite; in higher dimensions, one approach is to make use of the partial $C^0$ estimate (see \cite{TiZh15} for example).

Our work is inspired by \cite{Wa17}. The following is our main result on relative volume comparison which can be regarded as a relative version of Theorem \ref{no local collapsing}.

\begin{theorem}\label{relative noncollapsing}
For any $m$ and $A\ge 1$ there exists $\kappa=\kappa(m,A)>0$ such that the following holds. Let $g(t)$, $0\le t\le r_0^2$, be a solution to the Ricci flow on a compact $m$-manifold $M$ such that
\begin{equation}
|\Ric|\le r_0^{-2},\mbox{ on }B_{g(0)}(x_0,r_0)\times[0,r_0^2].
\end{equation}
Then, for any metric ball $B_{g(r_0^2)}(x,r)\subset B_{g(r_0^2)}(x_0,Ar_0)$ of radius $r\le r_0$ satisfying
\begin{equation}
R(\cdot,r_0^2)\le r^{-2}\,\mbox{ in } B_{g(r_0^2)}(x,r),
\end{equation}
we have the relative volume comparison
\begin{equation}\label{relative volume}
\frac{\vol_{g(r_0^2)}(B_{g(r_0^2)}(x,r))}{r^m}\,\ge\, \kappa\,\frac{\vol_{g(0)}(B_{g(0)}(x_0,r_0))}{r_0^m}.
\end{equation}
\end{theorem}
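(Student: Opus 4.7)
My strategy is to estimate Perelman's reduced volume $\widetilde V(\tau)$ based at the spacetime point $(x, r_0^2)$, with reverse time $\tau = r_0^2 - t$, at the two natural scales $\tau = r^2$ and $\tau = r_0^2$, and combine them via the monotonicity $\widetilde V(r^2) \ge \widetilde V(r_0^2)$, which holds since $r \le r_0$. This reduces (\ref{relative volume}) to (i) an upper bound for $\widetilde V(r^2)$ in terms of the volume ratio of the small ball $B_{g(r_0^2)}(x, r)$ at time $r_0^2$, and (ii) a lower bound for $\widetilde V(r_0^2)$ in terms of the volume ratio of the initial ball $B_{g(0)}(x_0, r_0)$.

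Step (i) is a direct adaptation of the improved no-local-collapsing arguments of Q.~Zhang \cite{ZhQi} and Wang \cite{Wa17}: a localized $\mathcal{W}$-entropy (equivalently, a localized Sobolev inequality) converts the single-time scalar curvature bound $R(\cdot, r_0^2) \le r^{-2}$ on $B_{g(r_0^2)}(x, r)$ into
\begin{equation*}
\widetilde V(r^2) \;\le\; C_1(m)\,\Big(\frac{\vol_{g(r_0^2)}(B_{g(r_0^2)}(x, r))}{r^m}\Big)^{\alpha(m)}
\end{equation*}
for some $\alpha(m) > 0$. Crucially no parabolic curvature hypothesis near $x$ is used.

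The new ingredient is step (ii), which exploits the Ricci bound $|\Ric| \le r_0^{-2}$ on the parabolic cylinder $B_{g(0)}(x_0, r_0) \times [0, r_0^2]$. Standard distance distortion makes $g(t)$ uniformly equivalent to $g(0)$ on the smaller ball $B_{g(0)}(x_0, c_0 r_0)$ for all $t \in [0, r_0^2]$, with $c_0 = c_0(m) > 0$, so the Bishop--Gromov estimate at each time slice of this good cylinder gives comparable volume ratios. The aim is
\begin{equation*}
\widetilde V(r_0^2) \;\ge\; c(m, A)\,\frac{\vol_{g(0)}(B_{g(0)}(x_0, r_0))}{r_0^m},
\end{equation*}
which reduces to proving $\ell(q, r_0^2) \le C(m, A)$ on a $g(0)$-definite fraction of $B_{g(0)}(x_0, c_0 r_0/2)$. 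For each such $q$ I would construct an $\mathcal{L}$-admissible curve $\gamma_q : [0, r_0^2] \to M$ from $x$ at $\tau = 0$ to $q$ at $\tau = r_0^2$ in two phases: a fast initial phase on $[0, \tau_0]$ with $\tau_0 \sim A^{-2} r_0^2$ that traverses a $g(r_0^2)$-geodesic from $x$ to a reference point $x_\ast \in B_{g(r_0^2)}(x_0, c_0 r_0/2)$, followed by a long phase on $[\tau_0, r_0^2]$ that remains inside the good cylinder and interpolates from $x_\ast$ to $q$. Inside the cylinder the Ricci bound controls $R$ pointwise and distance distortion controls $|\gamma_q'|^2$, so the second-phase contribution to $L(\gamma_q)$ is $O(r_0)$.

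The main obstacle is the initial phase on $[0, \tau_0]$, on which the curve may leave the controlled cylinder and $R$ may be large and positive, so a naive pointwise bound on $\int_0^{\tau_0} \sqrt{\tau}\,R\,d\tau$ is unavailable. To circumvent this, I would combine Hamilton's universal lower bound $R \ge -m/(2t)$ with an averaging argument over initial velocities at $x$: rather than fixing a single curve per endpoint, one expresses the contribution of the bad region to the reduced-volume integral via the $\mathcal{L}$-exponential Jacobian and uses the exponential weight $e^{-\ell}$ to absorb pointwise blow-ups of $R$ in an integrated sense. This is in the spirit of the localized-entropy proofs of Q.~Zhang and Wang, but pushed to produce a constant that depends only on $m$ and $A$; consequently the lower bound for $\widetilde V(r_0^2)$ scales with the \emph{initial} volume ratio rather than with a fixed noncollapsing threshold, yielding (\ref{relative volume}).
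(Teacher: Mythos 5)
Your proposal hinges on Perelman's reduced volume $\widetilde V$, and there are two genuine gaps. The first is in Step (i): you claim that the single-slice bound $R(\cdot,r_0^2)\le r^{-2}$ on $B_{g(r_0^2)}(x,r)$ together with a small volume ratio forces $\widetilde V(r^2)$ to be small. But $\widetilde V(r^2)$ is an integral over the time-$(r_0^2-r^2)$ slice weighted by $e^{-\ell}$, and bounding it from above requires some control of $\ell$, hence of the metric, in a parabolic neighbourhood of $(x,r_0^2)$; this is precisely why Perelman's Theorem 8.2 hypothesises $|Rm|\le r^{-2}$ on $B(x,r)\times[r_0^2-r^2,r_0^2]$. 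What Q.~Zhang and Wang actually improve is the \emph{local entropy}: $\mu_{B_{g(r_0^2)}(x,r)}(g(r_0^2),\tau)$ is bounded above by the volume ratio using only the single-time scalar bound (this is Corollary \ref{entropy-volume: 2} in the paper). That is a statement about the time-$r_0^2$ metric alone; it does not convert into an upper bound on the reduced volume, and with no control of $g(t)$ near $x$ for $t<r_0^2$ I do not see a bridge. As stated, your Steps (i) and (ii) do not meet.

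The second gap is in Step (ii). On the initial phase $[0,\tau_0]$ your curve $\gamma_q$ runs through a region near $x$ where there is no curvature control at earlier times, so $R(\gamma(\tau),r_0^2-\tau)$ may be arbitrarily large and \emph{positive}, driving $L(\gamma_q)$ and hence $\ell(q,r_0^2)$ up. Hamilton's $R\ge -m/(2t)$ only controls $R$ from below, and the ``averaging with the exponential weight $e^{-\ell}$'' works against you: to lower-bound $\widetilde V(r_0^2)$ you need $\ell$ to be small on a set of definite measure, but a large positive first-phase contribution makes $e^{-\ell}$ small, exactly the wrong direction. Perelman's own lower bound on $\widetilde V$ avoids explicit curve construction: it uses the $\mathcal{L}$-Laplacian comparison to produce, non-constructively, a point $\bar q$ with $\ell(\bar q,\tau)\le m/2$, then propagates that bound using curvature control around $x_0$ (not $x$). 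A workable version of Step (ii) would have to proceed that way, and Step (i) would still be open. The paper itself takes a different route entirely: it never uses reduced volume, proving instead a partial monotonicity of the local entropy $\mu_\Omega$ under Ricci flow (Section 2.2) together with a Cheeger--Yau-type lower bound for the conjugate heat kernel (Section 3), which is a backward solution launched from time $r_0^2$ and is therefore naturally controlled on the one-sided cylinder $B_{g(0)}(x_0,r_0)\times[0,r_0^2]$---this is what sidesteps both of the difficulties above.
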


We will prove this theorem by using a local version of Perelman's entropy functional. The application of localized entropy to Ricci flow appears firstly in \cite{ZhQi} and also in \cite{Wa17}.

\begin{remark}
Because the Ricci curvature is uniformly bounded on the space-time domain $B_{g(0)}(x_0,r_0)\times[0,r_0^2]$, the size of the metric balls $B_{g(0)}(x_0,e^{-1}r_0)$ and $B_{g(r_0^2)}(x_0,e^{-1}r_0)$ and their volumes are comparable under Ricci flow. So, the relative volume comparison (\ref{relative volume}) is equivalent to
\begin{equation}
\frac{\vol_{g(r_0^2)}(B_{g(r_0^2)}(x,r))}{r^n}\ge \kappa(m,A)\,\frac{\vol_{g(r_0^2)}(B_{g(r_0^2)}(x_0,e^{-1}r_0))}{r_0^n}.
\end{equation}
It is a relative volume comparison of metric balls in $(M,g(r_0^2))$.
\end{remark}

\begin{remark}
The constant $\kappa(m,A)$ can be calculated. See Remark \ref{constant 2} below.
\end{remark}

One application of Theorem \ref{relative noncollapsing} is to study the Gromov-Hausdorff convergence of the K\"ahler-Ricci flow on a smooth minimal model of Kodaira dimension between $1$ and $n-1$, where $n$ is the dimension of the manifold. In this case, the K\"ahler-Ricci flow admits a collapsing structure when the time goes to infinity, hence, previous no local collapsing results do not apply.

From now on, we let $X$ be a K\"ahler manifold of dimension $n$. Suppose that the Kodaira dimension is positive and strictly less than $n$. Inspired by the Abundance conjecture in algebraic geometry, we assume that $K_X$ is semi-ample. Let $\pi:X\rightarrow X_{can}$ be a holomorphic fibration onto its canonical model defined via a basis of $H^0(X,\ell K_X)$ for some $\ell>>1$. Recall that in \cite{SoTi07, SoTi12}, Song and the first named author constructed the generalized K\"ahler-Einstein current $\omega_{\GKE}$ on the canonical model $X_{can}$. Let $S\subset X_{can}$ denote the set of singular values of $\pi$. Then $\omega_{\GKE}$ is smooth on $X_{can}\backslash S$ and satisfies
\begin{equation}
\Ric(\omega_{\GKE})\,=\,-\omega_{\GKE}+\omega_{\WP},\,\mbox{ on }X_{can}\backslash S,
\end{equation}
where $\omega_{\WP}$ is the Weil-Petersson form on $X_{can}\backslash S$. Song and the first named author also proved the convergence of the K\"ahler-Ricci flow to $\pi^*\omega_{\GKE}$ in the current sense \cite{SoTi07, SoTi12}, furthermore, they proved the $C^0$-convergence on the potential level and in the case when $X$ is an elliptic surface the $C_{loc}^{1,\alpha}$-convergence of potentials on $X_{\reg}=\pi^{-1}(X_{can}\backslash S)$ for any $\alpha < 1$. In \cite{FoZh15}, Fong-Zhang proved the $C^{1,\alpha}$-convergence of potentials when $X$ is a global submersion over $X_{can}$ and showed the Gromov-Hausdorff convergence in the special case. In \cite{ToWeYa17} Tosatti-Weinkove-Yang improved the estimate and showed that the metric $\omega(t)$ converges to $\pi^*\omega_{\GKE}$ in the $C^0_{loc}$-topology on $X_{\reg}$. Moreover, Tosatti-Weinkove-Yang \cite{ToWeYa17} also proved that the restricted metric $\omega(t)|_{X_s}$ converges (up to scalings) in the $C^0$-topology to the unique Ricci flat metric on the fibre $X_s$ for any regular value $s$; this result is improved to be smooth convergence by Tosatti-Zhang in \cite{ToZh16}. In general, it is conjectured that the metric $\omega(t)$ should converge smoothly to $\pi^*\omega_{\GKE}$ on $X_{\reg}$; see \cite{SoTi07} for the case of K\"ahler surfaces. When the generic fibres of $\pi$ are tori or more generally finite quotients of tori, the conjecture is known to be true thanks to the work of Fong-Zhang \cite{FoZh15}, Hein-Tosatti \cite{HeTo15} and Tosatti-Zhang \cite{ToZh16} by developing parabolic version of certain arguments in \cite{GrToZh13}; also see Tosatti's note \cite{To15} for clearer and more unified discussions. In \cite{Gi14}, Gill considered the special case when $X$ is a product of a manifold of negative $c_1$ and a flat manifold. Also see one example of product elliptic surface in the note by Song-Weinkove \cite{SoWe13}.


Although many analytic aspects of the convergence of the K\"ahler-Ricci flow on $X$ have been known, we almost have no knowledge about the geometric convergence when the singular set $S$ is nonempty, even in the simplest case when $X$ is a minimal surface of Kodaira dimension 1. The difficulty is how to control the size of the singular fibres effectively under the K\"ahler-Ricci flow. Our volume comparison theorem leads to an approach to solve the difficulty. As one example we can prove the following theorem.

\begin{theorem}\label{KRF: minimal model}
Let $X$ be a K\"ahler manifold with $K_X$ semi-ample and Kodaira dimension 1. Suppose a K\"ahler-Ricci flow $\omega(t)$ on $X$ satisfies
\begin{equation}\label{Ricci: minimal model}
|\Ric|\le \Lambda,\,\mbox{ on }\pi^{-1}(U)\times[0,\infty),
\end{equation}
uniformly for some $\Lambda<\infty$ on a domain $U\subset X_{can}\backslash S$. Then $(X,\omega(t))$ converges in the Gromov-Hausdorff topology to the generalized K\"ahler-Einstein metric space $(X_{can},d_{\GKE})$.
\end{theorem}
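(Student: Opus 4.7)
The plan is to take the holomorphic fibration $\pi:X\to X_{can}$ itself as an $\epsilon(t)$-Gromov-Hausdorff approximation and show $\epsilon(t)\to 0$. Since the Kodaira dimension equals $1$, $X_{can}$ is a compact Riemann surface and $S\subset X_{can}$ is a finite set. On the regular locus $X_{\reg}=\pi^{-1}(X_{can}\setminus S)$, the established $C^0_{\loc}$-convergence $\omega(t)\to\pi^*\omega_{\GKE}$ \cite{SoTi07,ToWeYa17}, the smooth collapse of regular fibres \cite{ToZh16}, and the parabolic Schwarz-type lower bound $\omega(t)\ge\pi^*\omega_{\GKE}+o(1)$ (valid on compact subsets of $X_{\reg}$) combine to give
$$
\bigl|d_{\omega(t)}(x,y)-d_{\GKE}(\pi(x),\pi(y))\bigr|\,=\,o(1)
$$
uniformly on compact subsets of $X_{\reg}\times X_{\reg}$. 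Combined with Wang's uniform diameter bound \cite{Wa17}, everything therefore reduces to proving the following local collapse estimate at each singular value $s_0\in S$:
\begin{equation}\label{local collapse plan}
\lim_{\rho\to 0}\,\limsup_{t\to\infty}\,\diam_{\omega(t)}\bigl(\pi^{-1}(B_{d_{\GKE}}(s_0,\rho))\bigr)=0.
\end{equation}

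To attack (\ref{local collapse plan}) we invoke Theorem \ref{relative noncollapsing}. Fix a reference point $x_*\in\pi^{-1}(U)$ and set $r_0:=\Lambda^{-1/2}$. The Ricci bound (\ref{Ricci: minimal model}) together with the regular-part $C^0$-convergence on $\pi^{-1}(U)$ ensures that $B_{g(t-r_0^2)}(x_*,r_0)\subset\pi^{-1}(U)$ for every sufficiently large $t$, so the curvature hypothesis of Theorem \ref{relative noncollapsing} is met on $[t-r_0^2,t]$. Wang's diameter bound provides a uniform $A=A(m,\Lambda,D)$ with $B_{g(t)}(x,r)\subset B_{g(t)}(x_*,Ar_0)$ for every $x\in X$ and $r\le r_0$, and Z. Zhang's uniform scalar-curvature estimate $|R|\le C$ \cite{ZhZ09} furnishes the remaining ingredient. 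Theorem \ref{relative noncollapsing} then yields the uniform relative non-collapsing
\begin{equation}\label{vol lb plan}
\vol_{g(t)}B(x,r)\,\ge\,\kappa\,r^{2n}\,\frac{\vol_{g(t-r_0^2)}B(x_*,r_0)}{r_0^{2n}},\qquad x\in X,\ r\le\min(r_0,C^{-1/2}),
\end{equation}
holding at \emph{every} point of $X$, including points sitting inside the singular fibres.

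Suppose for contradiction that (\ref{local collapse plan}) fails: there are $\delta>0$, $\rho_k\to 0$, $t_k\to\infty$, and $x_k\in\pi^{-1}(B(s_0,\rho_k))$ with $\diam_{\omega(t_k)}\pi^{-1}(B(s_0,\rho_k))\ge\delta$. The parabolic Schwarz bound makes $\pi$ asymptotically $1$-Lipschitz, so $\pi(B_{g(t_k)}(x_k,r))\subset B_{d_{\GKE}}(s_0,\rho_k+(1+o(1))r)$ for every $r$ within the range of (\ref{vol lb plan}). On the other hand, the regular-part $C^0$-convergence pins down the total fibred volume
$$
\vol_{\omega(t_k)}\pi^{-1}(B_{d_{\GKE}}(s_0,R))\ \sim\ R^2\cdot\vol_{\omega(t_k)}(F_*)
$$
where $F_*$ is a nearby regular fibre, while $\vol_{g(t_k-r_0^2)}B(x_*,r_0)\sim r_0^2\cdot\vol_{\omega(t_k)}(F_*)$ as a cylindrical ball. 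Packing disjoint balls of radius $r\le r_0$ along a maximal geodesic realizing the diameter $\delta$, applying (\ref{vol lb plan}) to each of them, and comparing the resulting lower bound with the total-volume upper bound will, after taking $\rho_k\to 0$, force $\delta\to 0$, contradicting $\delta>0$.

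The main obstacle I foresee is the quantitative sharpness of (\ref{vol lb plan}): the inequality is effective on the macroscopic scale $r\sim r_0$ but becomes weak for $r\ll r_0$. The success of the packing argument therefore depends on choosing the scale $r$ carefully and possibly iterating the volume comparison across a dyadic sequence of scales. In particular, controlling the degeneration of the fibre volume $\vol_{\omega(t)}(\pi^{-1}(s))$ as $s\to s_0\in S$ requires a delicate analysis combined with the behaviour of $\omega_{\GKE}$ at the singular values, and is where the strength of the relative non-collapsing (\ref{vol lb plan}) must be used most essentially.
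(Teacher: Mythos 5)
Your overall strategy --- split $X$ into a ``regular'' part on which the $C^0_{loc}$-convergence yields a Gromov--Hausdorff approximation, reduce to controlling the size of the preimage tubes $\widetilde{U}_\epsilon=\pi^{-1}(U_\epsilon)$ around the finitely many singular values, and then use Theorem~\ref{relative noncollapsing} to control those tubes --- is the same as the paper's.  But your plan for the final step has several genuine gaps that the paper avoids by a different, more direct argument.

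First, you invoke ``Wang's uniform diameter bound'' to obtain the parameter $A$ in Theorem~\ref{relative noncollapsing}.  Wang's diameter bound in \cite{Wa17} is for smooth minimal models of \emph{general type}, i.e.\ non-collapsing K\"ahler--Ricci flow with $\mathrm{kod}(X)=n$; it does not apply in the collapsing Kodaira-dimension-one setting, where the diameter bound for $(X,\omega(t))$ is one of the \emph{conclusions} being proved, not a tool available from the start.  The paper instead bounds $A$ using only the diameter of $(X_{can},d_{\GKE})$ (finite by \cite{Zh17}) together with the fact that the balls to be bounded sit inside $\widetilde{U}_\epsilon$ and touch its boundary, which lies in the regular part $X\setminus\widetilde{U}_\epsilon$ whose $d_t$-diameter is controlled by the $C^0$ convergence.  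Your version is circular.

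Second, your claim that $\pi$ is asymptotically $1$-Lipschitz, used to deduce $\pi(B_{g(t_k)}(x_k,r))\subset B_{d_{\GKE}}(s_0,\rho_k+(1+o(1))r)$, follows from $\omega(t)\ge(1-o(1))\pi^*\omega_{\GKE}$ only on compact subsets of $X_{\reg}$.  Near the singular fibres there is no such control on $\omega(t)$ relative to $\pi^*\omega_{\GKE}$, so this inclusion is unjustified precisely in the region where you need it.

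Third, the packing argument is sketched but not carried out, and the dimension count is delicate: with $m=2n$ real dimensions, packing $\sim\delta/r$ disjoint balls of radius $r$ each of volume $\gtrsim\kappa(r/r_0)^{2n}\vol B(x_*,r_0)$, and comparing against a tube volume $\sim\rho_k^2\vol(F_*)$, does \emph{not} cleanly force $\delta\to 0$ for $n\ge 3$ unless $r$ is chosen as a function of $\rho_k$ in a way you have not specified and which then conflicts with the constraint $r\le r_0$.  You flag this yourself as ``the main obstacle,'' but it is not resolved.

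Finally, you make no use of Y.S.~Zhang's local estimates for $\omega_{\GKE}$ near the singular values \cite{Zh17}.  These are essential in the paper's proof: they give that $(X_{can},d_{\GKE})$ is a \emph{compact} metric space equal to the metric completion, and, crucially, that $\diam(\partial B_\chi(p_i,\epsilon),d_{\GKE})\to 0$ as $\epsilon\to 0$, which controls the $d_t$-diameter of the boundary of each tube.  Without this, showing that a point deep inside $\widetilde{U}_\epsilon$ is close to the boundary does not by itself control the Gromov--Hausdorff distance from $X$ to $X\setminus\widetilde{U}_\epsilon$.

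The paper's argument is simpler and dodges all these issues: it bounds $\vol_{\omega(t)}(\widetilde{U}_\epsilon)/\vol_{\omega(t)}(X)\le\delta$ directly from the uniform volume-form estimates, bounds $\vol_{\omega(t)}(B(\tilde x_0,r_0))$ below by a definite fraction of $\vol_{\omega(t)}(X)$ using the $C^0$-convergence on the regular part, rescales to the unnormalized flow, and then applies Theorem~\ref{relative noncollapsing} to conclude that any metric ball inside $\widetilde{U}_\epsilon$ touching its boundary has radius $\lesssim\delta^{1/(2n)}$.  Combined with Zhang's estimate on the boundary diameter, this directly gives $d_{GH}((X\setminus\widetilde{U}_\epsilon,d_t),(X,d_t))$ small, no packing or contradiction needed.
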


In the proof we also make crucial use of the local estimate of the metric $\omega_{\GKE}$ around the singular points proved by Y.S. Zhang \cite{Zh17}; see also \cite{ZhZh16} for the case when $X$ is a minimal surface. It follows in particular that the generalized K\"ahler-Einstein metric space $(X_{can},d_{\GKE})$ is nothing but the metric completion of $(X_{can}\backslash S,\omega_{\GKE})$; see Section 6 for further discussion of this space. By the work of \cite{FoZh15, GrToZh13, HeTo15, ToZh16}, we know that if the fibration $\pi:X\rightarrow X_{can}$ has generic fibres finite quotient of complex tori, then, for any compact subset $K\subset X_{can}\backslash S$, there exists $C=C(\omega(0),K)$ such that
\begin{equation}\label{e: 615}
|\Ric(t)|\le C,\,\mbox{ on }\pi^{-1}(K)\times[0,\infty),
\end{equation}
and that $\omega(t)\rightarrow\pi^*\omega_{\GKE}$ smoothly on $K$ as $t\rightarrow\infty$. So we immediately have

\begin{corollary}
Let $X$ be a K\"ahler manifold with $K_X$ semi-ample and Kodaira dimension 1. If the generic fibres of the holomorphic fibration $\pi:X\rightarrow X_{can}$ are finite quotients of tori, then any K\"ahler-Ricci flow on $X$ converges in the Gromov-Hausdorff topology to the generalized K\"ahler-Einstein metric space $(X_{can},d_{\GKE})$.
\end{corollary}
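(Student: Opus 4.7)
The plan is to derive this corollary directly from Theorem \ref{KRF: minimal model} by verifying its Ricci-curvature hypothesis in the torus-fibre setting. First I would invoke the estimate (\ref{e: 615}), which packages results of Fong--Zhang, Gross--Tosatti--Zhang, Hein--Tosatti, and Tosatti--Zhang: since the generic fibres of $\pi:X\to X_{can}$ are finite quotients of complex tori, for every compact subset $K\subset X_{can}\setminus S$ there is a constant $C=C(\omega(0),K)$ such that $|\Ric(\omega(t))|\le C$ on $\pi^{-1}(K)\times[0,\infty)$.

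Because $X$ has Kodaira dimension $1$, the canonical model $X_{can}$ is a compact complex curve and the singular value set $S\subset X_{can}$ is finite, so $X_{can}\setminus S$ is a nonempty open Riemann surface. I would pick any nonempty open subset $U\subset X_{can}\setminus S$ whose closure $\overline{U}$ is compact and still contained in $X_{can}\setminus S$---for instance a small coordinate disc centred at a regular value of $\pi$. Setting $K=\overline{U}$ and $\Lambda:=C(\omega(0),\overline{U})$ in the previous estimate gives
\begin{equation*}
|\Ric(\omega(t))|\le\Lambda\qquad\text{on }\pi^{-1}(U)\times[0,\infty),
\end{equation*}
which is precisely the hypothesis (\ref{Ricci: minimal model}) of Theorem \ref{KRF: minimal model}. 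Applying that theorem then yields the Gromov--Hausdorff convergence of $(X,\omega(t))$ to the generalised K\"ahler--Einstein metric space $(X_{can},d_{\GKE})$, which is the desired conclusion.

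There is no genuine obstacle in this reduction: the corollary is essentially a packaging of two deep inputs, namely the uniform Ricci bound on preimages of compact subsets of $X_{can}\setminus S$ supplied by the torus-fibration literature, and the relative volume comparison Theorem \ref{relative noncollapsing} that drives Theorem \ref{KRF: minimal model}. The only small point to verify is the existence of relatively compact open subsets of $X_{can}\setminus S$, which is immediate in complex dimension one. Notably, Theorem \ref{KRF: minimal model} has been set up so that a uniform Ricci bound over the preimage of a single open subset of $X_{can}\setminus S$ is enough to force global Gromov--Hausdorff convergence onto all of $X_{can}$; this is the reason such a seemingly local hypothesis from the torus-fibration results can be propagated to the full manifold.
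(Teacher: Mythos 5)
Your proposal is correct and follows precisely the route the paper intends: invoke the uniform Ricci bound (\ref{e: 615}) from the torus-fibration literature on the preimage of a relatively compact subset of $X_{can}\setminus S$, note that this verifies hypothesis (\ref{Ricci: minimal model}), and then apply Theorem \ref{KRF: minimal model}. The paper leaves the reduction implicit ("So we immediately have"), and your write-up simply spells it out.
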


In particular, we solve a conjecture in \cite{SoTi07} for elliptic surfaces.

\begin{corollary}\label{KRF: minimal surface}
Let $X$ be a smooth minimal elliptic surface of Kodaira dimension 1 and $X_{can}$ be its canonical model. Any K\"ahler-Ricci flow $\omega(t)$ on $X$ converges in the Gromov-Hausdorff topology to the generalized K\"ahler-Einstein metric space $(X_{can},d_{\GKE})$.
\end{corollary}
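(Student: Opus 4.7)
The plan is to reduce the statement to the preceding Corollary, so that the entire analytic content is inherited from Theorem \ref{KRF: minimal model} applied to elliptic surfaces. Two standard algebro-geometric facts are all that need to be recorded.

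First, I would verify that for a smooth minimal elliptic surface $X$ of Kodaira dimension one the canonical bundle $K_X$ is semi-ample, so that the framework of the preceding Corollary applies: by classical surface theory the Iitaka fibration coincides with the elliptic fibration $\pi:X\to C$ onto a smooth projective curve $C$, and Kodaira's canonical bundle formula shows that some positive multiple of $K_X$ is the pullback of an ample divisor on $C$ (after absorbing the multiple-fibre contributions). Hence $X_{can}=C$ and the pluricanonical map agrees with $\pi$. Second, I would note that the generic fibre of an elliptic fibration is by definition a smooth curve of genus one, hence a one-dimensional complex torus $\mathbb{C}/\Lambda$.

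With these two observations in hand the hypothesis ``generic fibres of $\pi$ are finite quotients of tori'' in the preceding Corollary is satisfied, and that Corollary delivers the Gromov-Hausdorff convergence of $(X,\omega(t))$ to $(X_{can},d_{\GKE})$. Concretely: the results of Fong-Zhang, Gross-Tosatti-Zhang, Hein-Tosatti and Tosatti-Zhang provide the uniform Ricci bound (\ref{e: 615}) over any compact $K\subset X_{can}\setminus S$, which is precisely the hypothesis of Theorem \ref{KRF: minimal model}; and Theorem \ref{KRF: minimal model} then yields the limit (invoking the relative volume comparison Theorem \ref{relative noncollapsing} together with Y.S.\ Zhang's local estimate for $\omega_{\GKE}$ to rule out collapse along the singular fibres). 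There is no genuinely new obstacle at the level of this Corollary; the only required work is the algebro-geometric bookkeeping identifying $X_{can}$ with the base curve of the elliptic fibration, the substantive analytic input having already been done in Theorem \ref{KRF: minimal model} and in the cited collapsed-fibre convergence results.
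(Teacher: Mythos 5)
Your proposal is correct and takes essentially the same approach as the paper: the paper presents this corollary as an immediate specialization of the preceding corollary about fibres that are finite quotients of tori, and you are simply supplying the standard bookkeeping (semi-ampleness of $K_X$ for a smooth minimal elliptic surface of Kodaira dimension one via Kodaira's canonical bundle formula, identification of $X_{can}$ with the base curve, and the observation that a generic elliptic fibre is a one-dimensional torus) that the paper leaves implicit. No new analytic content is needed beyond Theorem \ref{KRF: minimal model} and the cited collapsed-fibre results, exactly as you say.
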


The case of higher Kodaira dimension is not totally clear. On one hand, one can follow Song-Tian's construction to define the generalized K\"ahler-Einstein metric space and furthermore, consider the geometric convergence of the K\"ahler-Ricci flow to this metric space. On the other hand, according to the AMMP proposed by Song and the first named author \cite{SoTi12, SoTi09}, the generalized K\"ahler-Einstein metric space should be nothing but the canonical model of the manifold. In case of general type one can probably use the partial $C^0$ estimate to identify the K\"ahler-Einstein metric space. However, in the case when the Kodaira dimension is strictly less than the dimension of the manifold, there has been no effective method to identify the metric space. See Section 6 for some further discussions.

We end the introduction with a brief discussion on the organization of the paper. In Sections 2-4 we prove the main Theorem \ref{relative noncollapsing}. The key ingredient is the Li-Yau type lower bound of the conjugate heat kernel along Ricci flow. The proof uses the comparison principle of Cheeger-Yau \cite{ChYa81} on the heat kernel and the Harnack inequalities along Ricci flow: Perelman's Harnack \cite{Pe02} for heat equations and Kuang-Zhang's Harnack \cite{KuZhQ} for conjugate heat equations. In Section 5, we give a proof of Theorem \ref{KRF: minimal model} as an application of Theorem \ref{relative noncollapsing}. In Section 6, we discuss some open problems on K\"ahler-Ricci flow on minimal models of higher dimensions.

\noindent
{\bf Acknowledgement}: The second named author would like to thank Y.S. Zhang for his discussion on the generalized K\"ahler-Einstein metric, and Q. S. Zhang and S.J. Zhang for pointing out some typos and an inaccuracy in the proof of Lemma 3.1.

\section{Local entropy and Ricci flow}

In \cite{Pe02} Perelman introduced the entropy functional on a Riemannian manifold and  proved its monotonicity under Ricci flow, then he applied it to prove the no local collapsing theorem 4.1 \cite{Pe02}. In \cite{ZhQi} and \cite{Wa17} the authors considered the localization of the entropy and improved Perelman's no local collapsing theorem. Our aim is to prove a relative version of Perelman's no local collapsing theorem. In this section we recall the basic notations of the localized entropy and prove some related estimates.

Let $(M,g)$ be a compact Riemannian $m$-manifold. Recall Perelman's $\mathcal{W}$-functional \cite{Pe02}
\begin{equation}
\mathcal{W}(g,f,\tau)=\int_M\big[\tau(R+|\nabla f|^2)+f-m\big](4\pi\tau)^{-m/2}e^{-f}dv.
\end{equation}
After putting $u=(4\pi\tau)^{-m/4}e^{-f/2}$, it can be rewritten as
\begin{equation}
\mathcal{W}(g,u,\tau)=\tau\int_M(Ru^2+4|\nabla u|^2)dv-\int_M u^2\log u^2dv-\frac{m}{2}\log(4\pi\tau)-m.
\end{equation}
Let $\Omega$ be any bounded domain of $M$. We define the local entropy (cf. \cite{ZhQi} and \cite{Wa17})
\begin{equation}
\mu_\Omega(g,\tau)=\inf\big\{\mathcal{W}(g,u,\tau)\big|u\in C_0^\infty(\Omega),\,\int_\Omega u^2=1\big\}.
\end{equation}
When $\Omega=M$ it is exactly the entropy $\mu(g,\tau)$ introduced in Perelman's paper \cite{Pe02}. It can be checked easily that the entropy satisfies the scaling invariance
\begin{equation}\label{invariance}
\mu_{\Omega}(cg,c\tau)=\mu_{\Omega}(g,\tau)
\end{equation}
for any positive constant $c$. Moreover, the entropy satisfies the monotonicity
\begin{equation}
\mu_{\Omega'}(g,\tau)\ge\mu_{\Omega}(g,\tau)
\end{equation}
for any subdomain $\Omega'\subset\Omega$. When $\Omega$ has smooth boundary, there always exists a minimizer of $\mu_\Omega$ which is smooth in $\Omega$ and continuous up to the boundary \cite{Ro81}.

Following Perelman \cite{Pe02} we also define the local energy functional, for any $a>0$,
\begin{equation}
\lambda_{a,\Omega}(g)=\inf\big\{\int_\Omega(Ru^2+a|\nabla u|^2)dv\big| u\in C_0^\infty(\Omega),\,\int_\Omega u^2dv=1\big\}.
\end{equation}
It is obviously that $\lambda_{a,\Omega}$ is the smallest eigenvalue of the operator $R-a\triangle$ with Dirichlet condition. It satisfies the scaling property $\lambda_{a,\Omega}(cg)=c^{-1}\lambda_{a,\Omega}(g)$.

\subsection{Basic estimates}

The entropy $\mu_\Omega$ is roughly equivalent to the Log-Sobolev inequality of the domain $\Omega$. We will show that it is also crucially related to the volume ratio of the domain. For some further estimates of $\mu_\Omega$ we refer to \cite{ZhQi} and \cite{Wa17}.

\begin{lemma}
For any domain $\Omega$, metric $g$ and $\tau>0$ we have
\begin{equation}\label{e: 215}
\mu_\Omega(g,\tau)\le\tau\lambda_{4,\Omega}+\log\vol_g(\Omega)-\frac{m}{2}\log\tau+C(m)
\end{equation}
and
\begin{equation}\label{e: 216}
\mu_\Omega(g,\tau)\ge\tau\lambda_{3,\Omega}+\log\vol_g(\Omega)-m\log C_s(\Omega)-C(m),
\end{equation}
where $C_s(\Omega)$ is the Sobolev constant of $\Omega$ in the sense that
\begin{equation}
\bigg(\fint_\Omega f^{\frac{2m}{m-2}}dv\bigg)^{\frac{m-2}{2m}}\le C_s\bigg(\fint_\Omega|\nabla f|^2dv\bigg)^{1/2},\,\forall f\in C_0^\infty(\Omega).
\end{equation}
\end{lemma}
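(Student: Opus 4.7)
The plan is to work directly with the $u$-formulation
$\mathcal{W}(g,u,\tau)=\tau\int_\Omega(Ru^2+4|\nabla u|^2)\,dv-\int_\Omega u^2\log u^2\,dv-\tfrac{m}{2}\log(4\pi\tau)-m$
and attack the two inequalities by different elementary routes, using only Jensen's inequality, the given Sobolev inequality, and a one-variable Young-type estimate.

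For the upper bound (\ref{e: 215}), I would plug in the $L^2$-normalized first Dirichlet eigenfunction $\phi$ of the operator $R-4\triangle$ on $\Omega$, which is smooth in $\Omega$ and approximable in the natural norm by functions in $C_0^\infty(\Omega)$. Then $\tau\int_\Omega(R\phi^2+4|\nabla\phi|^2)\,dv=\tau\lambda_{4,\Omega}$ by definition, and the only remaining term is $-\int\phi^2\log\phi^2\,dv$. Setting $F:=\vol_g(\Omega)\,\phi^2$, the function $F$ has mean $1$ with respect to the probability measure $dv/\vol_g(\Omega)$, so convexity of $x\log x$ and Jensen's inequality yield $\int_\Omega F\log F\,\frac{dv}{\vol_g(\Omega)}\ge 0$, which unpacks to $-\int_\Omega\phi^2\log\phi^2\,dv\le\log\vol_g(\Omega)$. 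Substituting into $\mathcal{W}$ and collecting the dimensional constants gives (\ref{e: 215}) with $C(m)=-\tfrac{m}{2}\log(4\pi)-m$.

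For the lower bound (\ref{e: 216}), I would first split $4|\nabla u|^2=3|\nabla u|^2+|\nabla u|^2$ and apply the definition of $\lambda_{3,\Omega}$ to the first piece, obtaining $\mathcal{W}(g,u,\tau)\ge\tau\lambda_{3,\Omega}+\tau\int_\Omega|\nabla u|^2\,dv-\int_\Omega u^2\log u^2\,dv-\tfrac{m}{2}\log(4\pi\tau)-m$. The remaining task is the standard Sobolev-implies-log-Sobolev chain. First, Jensen's inequality on the probability measure $u^2\,dv$ applied to $\log u^{q-2}$ with $q=2m/(m-2)$ gives $\int_\Omega u^2\log u^2\,dv\le\tfrac{m-2}{2}\log\int_\Omega u^q\,dv$. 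Second, rewriting the hypothesis as $\int_\Omega u^q\,dv\le C_s(\Omega)^q\,\vol_g(\Omega)^{1-q/2}\bigl(\int_\Omega|\nabla u|^2\,dv\bigr)^{q/2}$, taking logarithms, and simplifying with $q=2m/(m-2)$ produces $\int_\Omega u^2\log u^2\,dv\le m\log C_s(\Omega)-\log\vol_g(\Omega)+\tfrac{m}{2}\log\int_\Omega|\nabla u|^2\,dv$. Third, the elementary inequality $\tfrac{m}{2}\log X\le\tau X-\tfrac{m}{2}\log\tau+\tfrac{m}{2}\log(m/2)-\tfrac{m}{2}$, obtained by maximizing $\tfrac{m}{2}\log X-\tau X$ in $X>0$ at $X=m/(2\tau)$, applied to $X=\int_\Omega|\nabla u|^2\,dv$, absorbs the $\tau\int|\nabla u|^2$ term and produces a $-\tfrac{m}{2}\log\tau$ that cancels $\tfrac{m}{2}\log(4\pi\tau)$ up to a dimensional constant. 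Assembling yields (\ref{e: 216}).

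The main obstacle is purely computational: keeping precise track of the exponents and additive constants in the chain Jensen$\,\to\,$Sobolev$\,\to\,$Young so that the $\tau$- and $\vol_g(\Omega)$-dependence assembles into exactly the claimed form, in particular so the $\tfrac{m}{2}\log\tau$ generated by Young cancels against the $-\tfrac{m}{2}\log(4\pi\tau)$ already sitting in $\mathcal{W}$. No new analytic input beyond the hypothesized Sobolev inequality is required.
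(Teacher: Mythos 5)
Your proof is correct. The lower bound follows the paper's argument line for line: split $4|\nabla u|^2=3|\nabla u|^2+|\nabla u|^2$, absorb the first piece into $\lambda_{3,\Omega}$, control $\int u^2\log u^2$ via Jensen's inequality on $u^2\,dv$ followed by the Sobolev hypothesis, and finally discharge the residual $\tau\int|\nabla u|^2$ against $\tfrac{m}{2}\log\int|\nabla u|^2$ by optimizing $X\mapsto \tau X-\tfrac{m}{2}\log X$. The upper bound, however, takes a shorter route than the paper's. Both use the first Dirichlet eigenfunction of $R-4\triangle$ as the test function, but the paper first applies the crude pointwise estimate $-x\log x\le 1$ to get a bound involving $\vol_g(\Omega)$ rather than $\log\vol_g(\Omega)$, and then invokes the scaling invariance $\mu_\Omega(cg,c\tau)=\mu_\Omega(g,\tau)$ with $c=\vol_g(\Omega)^{-2/m}$ to convert the linear volume term into a logarithmic one. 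You instead apply Jensen's inequality directly to $F=\vol_g(\Omega)\phi^2$ with respect to the normalized measure $dv/\vol_g(\Omega)$, which gives $-\int_\Omega\phi^2\log\phi^2\,dv\le\log\vol_g(\Omega)$ in one line and makes the scaling step unnecessary. Both are valid; your upper-bound argument is cleaner and, unlike the paper's version, does not rely on the scaling identity (\ref{invariance}) at this stage.
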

\begin{proof}
We adopt the arguments from \cite{Zh07}. We first prove the upper bound. Let $\lambda_a=\lambda_{a,\Omega}$ for simplicity. Let $u$ be the eigenfunction of $\lambda_4$. Then, by definition of $\mu_\Omega$ and the trivial fact $-x\log x\le 1$ for any $x>0$, we have
\begin{eqnarray}\nonumber
\mu_\Omega(g,\tau)\le\tau\lambda_4(g)+\vol_g(\Omega)-\frac{m}{2}\log\tau+C(m),\,\forall\tau>0.
\end{eqnarray}
Then we apply the scaling invariance of $\lambda$ and $\mu_\Omega$ to get
\begin{eqnarray}\nonumber
\mu_\Omega(cg,c\tau)&\le& c\tau\lambda_4(cg)+\vol_{cg}(\Omega)-\frac{m}{2}\log(c\tau)+C(m)\nonumber\\
&=&\tau\lambda_4(g)+c^{m/2}\cdot\vol_{g}(\Omega)-\frac{m}{2}\log\tau-\frac{m}{2}\log c+C(m)\nonumber.
\end{eqnarray}
In particular if we choose $c=\vol_g(\Omega)^{-2/m}$, then we get
$$\mu(g,\tau)=\mu(cg,c\tau)\le\tau\lambda_4(g)+\log\vol_{g}(\Omega)-\frac{m}{2}\log\tau+C(m).$$

Then we prove the lower bound. We shall apply the Sobolev inequality and Jensen inequality: for any $u\in C_0^\infty(\Omega)$ with $\int_\Omega u^2dv=1$, we have, w.r.t. the measure $d\mu=u^2dv$,
\begin{eqnarray}
-\int_\Omega u^2\log u^2dv&=&-\int_\Omega\log u^2d\mu=-\frac{m-2}{2}\int_\Omega\log u^{\frac{2m}{m-2}-2}d\mu\nonumber\\
&\ge&-\frac{m-2}{2}\log\int_\Omega u^{\frac{2m}{m-2}-2}d\mu\nonumber\\
&=&-m\log\bigg(\int_\Omega u^{\frac{2m}{m-2}}dv\bigg)^{\frac{m-2}{2m}}\nonumber\\
&=&-m\log\bigg(\fint_\Omega u^{\frac{2m}{m-2}}dv\bigg)^{\frac{m-2}{2m}}-\frac{m-2}{2}\log\vol(\Omega)\nonumber\\
&\ge&-m\log C_s-\frac{m}{2}\log\fint_\Omega|\nabla u|^2dv-\frac{m-2}{2}\log\vol(\Omega)\nonumber\\
&=&-m\log C_s-\frac{m}{2}\log\int_\Omega|\nabla u|^2dv+\log\vol(\Omega)\nonumber.
\end{eqnarray}
Thus,
\begin{eqnarray}
\mathcal{W}(g,u,\tau)&\ge&\tau\int_\Omega(Ru^2+4|\nabla u|^2)dv-\frac{m}{2}\log\int_\Omega|\nabla u|^2dv\nonumber\\
&&\hspace{2cm}-m\log C_s+\log\vol(\Omega)-\frac{m}{2}\log\tau-C(m)\nonumber\\
&\ge&\tau\lambda_{3}+\tau\int_\Omega|\nabla u|^2dv-\frac{m}{2}\log\bigg(\tau\int_\Omega|\nabla u|^2dv\bigg)\nonumber\\
&&\hspace{2cm}+\log\vol(\Omega)-m\log C_s-C(m).\nonumber
\end{eqnarray}
Finally we use the easy fact
$$x-\frac{m}{2}\log x\ge\frac{1}{2}+\frac{m}{2}\log 2,\,\forall x>0.$$
to conclude that
$$\mathcal{W}(g,u,\tau)\ge\tau\lambda_{3}+\log\vol(\Omega)-m\log C_s-C(m).$$
It gives the lower bound of $\mu_\Omega(g,\tau)$.
\end{proof}

\begin{corollary}\label{entropy-volume: 1}
Let $B(x,2r)\subset M$ be a metric ball with $\partial B(x,2r)\neq\emptyset$. If
\begin{equation}\label{e: 220}
\Ric\ge-r^{-2},\mbox{ in }B(x,2r),
\end{equation}
then
\begin{equation}\label{e: 221}
\log\frac{\vol_g(B(x,r))}{r^m}\le\inf_{0<\tau\le r^2}\mu_{B(x,r)}(g,\tau)+C(m).
\end{equation}
\end{corollary}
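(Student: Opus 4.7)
The plan is to apply the Sobolev-based lower bound (\ref{e: 216}) to the domain $\Omega = B(x, r)$, and to show that, under the Ricci lower bound in $B(x, 2r)$, every term on the right-hand side of (\ref{e: 216}) other than $\log \vol_g(B(x, r))$ is controlled, up to a dimensional constant, by $m \log r$ uniformly in $\tau \in (0, r^2]$. Taking the infimum over $\tau$ will then yield (\ref{e: 221}).

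The first step is immediate: since $\Ric \ge -r^{-2}$ in $B(x, 2r)$, the scalar curvature satisfies $R = \tr \Ric \ge -m\, r^{-2}$ pointwise on $B(x, r)$, and hence testing against any normalized $u$ gives
\begin{equation*}
\lambda_{3, B(x, r)}(g) \,\ge\, \inf_{B(x, r)} R \,\ge\, -m\, r^{-2},
\end{equation*}
so $\tau\, \lambda_{3, B(x, r)}(g) \ge -m$ for every $\tau \in (0, r^2]$.

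The second step is the key one: I must show that the normalized Dirichlet Sobolev constant satisfies $C_s(B(x, r)) \le C(m)\, r$. Under $\Ric \ge -r^{-2}$ on $B(x, 2r)$, the Bishop--Gromov comparison theorem furnishes a uniform volume-doubling property on balls of radius at most $r$ centered in $B(x, r)$, and Buser's inequality furnishes a scale-invariant $L^2$-Poincar\'e inequality on the same class of balls. The Saloff-Coste machinery (doubling plus $L^2$-Poincar\'e implies a scale-invariant Sobolev inequality) then produces the normalized $L^{2m/(m-2)}$-Sobolev inequality on $B(x, r)$ with constant at most $C(m)\, r$. The crucial feature is that this bound requires no a priori non-collapsing assumption on the ball; any possible collapse is absorbed into the normalization $\fint$ built into the definition of $C_s$ in the preceding lemma. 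I expect this to be the main obstacle of the argument, since it requires invoking the doubling-Poincar\'e-Sobolev machinery in a sharp scale-invariant form under only a Ricci lower bound, and also appears to be the reason for the technical hypothesis $\partial B(x, 2r) \ne \emptyset$, which ensures one has genuine Dirichlet test functions supported strictly inside $B(x, 2r)$.

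Plugging the two estimates into (\ref{e: 216}) gives, for every $\tau \in (0, r^2]$,
\begin{equation*}
\mu_{B(x, r)}(g, \tau) \,\ge\, -m + \log \vol_g(B(x, r)) - m \log(C(m)\, r) - C(m) \,=\, \log \frac{\vol_g(B(x, r))}{r^m} - C'(m),
\end{equation*}
and taking the infimum over $\tau \in (0, r^2]$ yields (\ref{e: 221}).
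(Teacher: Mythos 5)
Your proposal is correct and follows essentially the same route as the paper: substitute into the lower bound (\ref{e: 216}), bound $\lambda_{3,B(x,r)}\ge\inf_{B(x,r)}R\ge -mr^{-2}$, and use the Sobolev constant estimate $C_s(B(x,r))\le C(m)\,r$ under the local Ricci lower bound. The paper simply cites this last bound as ``well-known,'' whereas you sketch the doubling--Poincar\'e--Sobolev argument behind it; the only slight misstatement is your side remark on the role of $\partial B(x,2r)\neq\emptyset$ (the hypothesis ensures $B(x,2r)\subsetneq M$ so the comparison geometry behaves cleanly, not that Dirichlet test functions exist), but this does not affect the argument.
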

\begin{proof}
It is well-known that under the Ricci lower bound (\ref{e: 220}) the Sobolev constant satisfies the uniform bound
\begin{equation}
C_s(B(x,r))\le C(m)\cdot r.
\end{equation}
Moreover, the eigenvalue $\lambda_{3,B(x,r)}$ admits the trivial lower bound
\begin{equation}
\lambda_{3,B(x,r)}\ge\inf_{B(x,r)}R\ge-mr^{-2}.
\end{equation}
Substituting both estimates into the formula (\ref{e: 216}) gives the desired result.
\end{proof}

\begin{corollary}\label{entropy-volume: 2}
Let $B(x,r)$ be a metric ball with $\partial B(x,r)\neq\emptyset$. If the scalar curvature
\begin{equation}\label{e: 217}
R\le mr^{-2},\mbox{ in }B(x,r),
\end{equation}
then,
\begin{equation}\label{e: 218}
\log\frac{\vol(B(x,r))}{r^m}\ge \inf_{0<\tau\le r^2}\mu_{B(x,r)}(g,\tau)-C(m).
\end{equation}
\end{corollary}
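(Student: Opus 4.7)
The plan is to exhibit a good test function in the variational definition of $\mu_{B(x,r)}(g,\tau)$ and then to choose $\tau\le r^2$ optimally. The natural starting point is the upper bound (\ref{e: 215}) of the preceding lemma,
\[
\mu_{B(x,r)}(g,\tau)\le\tau\,\lambda_{4,B(x,r)}(g)+\log\vol_g(B(x,r))-\frac{m}{2}\log\tau+C(m),
\]
valid for every $\tau>0$. Plugging in $\tau=r^2$ reshuffles this into
\[
\mu_{B(x,r)}(g,r^2)\le r^2\,\lambda_{4,B(x,r)}(g)+\log(\vol_g(B(x,r))/r^m)+C(m),
\]
so the desired inequality (\ref{e: 218}) reduces to showing the eigenvalue bound $\lambda_{4,B(x,r)}(g)\le C(m)\,r^{-2}$.

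To establish $\lambda_{4,B(x,r)}(g)\le C(m)\,r^{-2}$ I would test the Rayleigh quotient of $R-4\triangle$ against the Lipschitz cutoff $\phi(y)=\min\{1,\,2(1-d(x,y)/r)\}_+$, which equals $1$ on $B(x,r/2)$, vanishes on $\partial B(x,r)$, and satisfies $|\nabla\phi|\le 2/r$ almost everywhere. The scalar curvature hypothesis gives $\int R\phi^2\,dv\le m\,r^{-2}\int\phi^2\,dv$, while the gradient integral is controlled by $\int 4|\nabla\phi|^2\,dv\le 16\,r^{-2}\vol_g(B(x,r))$. Combined, these yield
\[
\lambda_{4,B(x,r)}(g)\le m\,r^{-2}+\frac{16}{r^2}\cdot\frac{\vol_g(B(x,r))}{\int\phi^2\,dv},
\]
and once this is $\le C(m)\,r^{-2}$ the corollary is proved.

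The main obstacle is the Poincar\'e-type lower bound $\int\phi^2\,dv\ge C(m)^{-1}\vol_g(B(x,r))$ needed to close the estimate; without an a priori Ricci lower bound this ratio can degenerate when the volume of $B(x,r)$ concentrates near $\partial B(x,r)$. A cleaner alternative that bypasses $\lambda_{4,B(x,r)}(g)$ entirely is to test $\mathcal{W}(g,u,\tau)$ directly against a Gaussian $u(y)=c\,e^{-d(x,y)^2/(8r^2)}$ on $B(x,r)$, smoothly truncated to satisfy the Dirichlet boundary condition and normalized so that $\|u\|_{L^2}=1$. With $\tau=r^2$, the hypothesis $R\le m\,r^{-2}$ yields $r^2\int Ru^2\,dv\le m$; the bound $d\le r$ on the support together with $|\nabla u|^2=u^2\,d^2/(16r^4)$ yields $4r^2\int|\nabla u|^2\,dv\le C(m)$; and Jensen's inequality applied to the probability density $u^2\,dv$ on $\mathrm{supp}\,u$ gives $-\int u^2\log u^2\,dv\le\log\vol_g(\mathrm{supp}\,u)\le\log\vol_g(B(x,r))$. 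Summing the contributions and absorbing dimensional constants yields
\[
\mathcal{W}(g,u,r^2)\le\log(\vol_g(B(x,r))/r^m)+C(m),
\]
from which (\ref{e: 218}) follows by taking the infimum.
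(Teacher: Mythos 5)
Your reduction via~(\ref{e: 215}) is the right starting point, and you correctly identify that the resulting obstacle is a Poincar\'e-type lower bound $\int\phi^2\,dv\ge C(m)^{-1}\vol_g(B(x,r))$, which fails without Ricci control because the volume may concentrate in a thin shell near $\partial B(x,r)$. The flaw in the proposal is that the Gaussian test function does \emph{not} bypass this obstacle: on $B(x,r)$ one has $e^{-1/8}\le e^{-d^2/(8r^2)}\le 1$, so the Gaussian weight is essentially constant and irrelevant. The only thing making $u$ a valid competitor in $C_0^\infty(B(x,r))$ is the Dirichlet truncation, and your computation $|\nabla u|^2=u^2 d^2/(16r^4)$ ignores the cutoff's gradient entirely. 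Once the cutoff $\chi$ is put back in, the term $c^2\int|\nabla\chi|^2 e^{-d^2/(4r^2)}\,dv$ appears, and normalizing $c^2\int\chi^2 e^{-d^2/(4r^2)}\,dv=1$ when the volume concentrates in a shell at $d\approx r-\epsilon$ forces $c^2$ to be of order $(\epsilon/r)^{-2}/\vol(\text{shell})$, so that $4r^2\int|\nabla u|^2$ blows up like $r^2/\epsilon^2$. In short, the second approach has exactly the same defect as the first; the Gaussian factor buys you nothing.

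The paper's proof closes this gap differently. It first proves the estimate under an a priori doubling hypothesis $\vol(B(x,r))\le 4^m\vol(B(x,r/2))$, under which the linear cutoff \emph{does} give $\lambda_{4,B(x,r)}\le C(m)r^{-2}$ (the doubling bound is precisely what controls the normalization of the test function against the gradient contribution on the annulus). It then takes the smallest $k_0\ge 0$ such that doubling holds at scale $2^{-k_0}r$, applies the preliminary estimate on $B(x,2^{-k_0}r)$ with $\tau=2^{-2k_0}r^2\le r^2$, invokes the monotonicity $\mu_{B(x,r)}(g,\tau)\le\mu_{B(x,2^{-k_0}r)}(g,\tau)$, and finally observes that failure of doubling at scales $2^{-j}r$ for $j<k_0$ forces the volume ratio at scale $r$ to dominate that at scale $2^{-k_0}r$. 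This dyadic reduction is the idea missing from your proposal: rather than fighting the volume concentration directly, one passes to the smallest subball where it cannot occur and uses domain monotonicity and the infimum over $\tau$ to return to $B(x,r)$.
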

\begin{proof}
The proof is essentially contained in Remark 13.13 of \cite{KlLo08}. We sketch its proof here for the sake of completeness. Assume in a prior that
\begin{equation}\label{doubling}
\vol(B(x,r))\le 4^m\vol(B(x,\frac{r}{2})).
\end{equation}
Then we claim that $\lambda_{4,B}\le C(m)\cdot r^{-2}$ under the assumption (\ref{e: 217}). Actually, it can be checked by choosing the test function $u$, in the definition of $\lambda_{4,B}$, which is a positive constant in $B(x,\frac{r}{2})$ and decreases linearly to 0 on $B(x,r)\backslash B(x,\frac{r}{2})$ such that $\int_Bu^2=1$. Then, by (\ref{e: 215}) we get, under (\ref{doubling}),
$$\mu_{B(x,r)}(g,r^2)\le\log\frac{\vol(B(x,r))}{r^m}+C(m).$$

In general, there always exists a minimum integer $k_0\ge 0$ such that (\ref{doubling}) holds for the radius $2^{-k_0}r$. Then, for this $k_0$ we have
\begin{equation}
\mu_{B(x,2^{-k_0}r)}(g,2^{-2k_0}r^2)\le\log\frac{\vol(B(x,2^{-k_0}r))}{2^{-k_0m}r^m}+C(m).
\end{equation}
By the monotonicity of $\mu$ with respect to the subdomains,
\begin{equation}
\mu_{B(x,r)}(g,2^{-2k_0}r^2)\le\mu_{B(x,2^{-k_0}r)}(g,2^{-2k_0}r^2).
\end{equation}
On the other hand, by the definition of $k_0$, we have by induction that
\begin{equation}
\frac{\vol(B(x,r))}{r^m}\ge\frac{\vol(B(x,2^{-k_0}r))}{2^{-k_0m}r^m}.
\end{equation}
The required volume ratio estimate follows from these three formulas.
\end{proof}

\subsection{Partial monotonicity under Ricci flow}

Following the calculations in Section 6.3 of \cite{ZhQi} and Section 5 of \cite{Wa17}, we sketch a partial monotonicity of the local entropy under Ricci flow.

Let $(M,g(t)),0\le t\le T,$ be a solution to the Ricci flow on a compact $m$-manifold. Let $r$ be a radius such that $r^2\le T$. Let $A\ge 1$ be a constant and $\Omega=B_{g(T)}(x_0,Ar)$ be a ball at time $T$ such that $\partial \Omega\neq\emptyset$. By approximating by smooth domains bigger than $\Omega$ and monotonicity of $\mu$ with respect to domains, in the following calculation we may assume that $\partial\Omega$ is smooth. Define $\tau(t)=\tau_0+T-t$ for some $\tau_0>0$.

By Rothaus \cite{Ro81}, there exists a minimizer of $\mu_\Omega(g(T),\tau_0)$, say $u\in C^\infty(\Omega)\cap C^0(\overline{\Omega})$ which vanishes identically on $\partial\Omega$. So $u$ can be viewed as a function on $M$ which vanishes outside $\Omega$. The Euler-Lagrange equation reads
\begin{equation}
\tau_0\big(-4\triangle u+Ru\big)-u\log u^2=\big(\mu_\Omega(g(T),\tau_0)+\frac{m}{2}\log(4\pi\tau)+m\big)\cdot u.
\end{equation}
Now let $v(t)$ be the solution to the backward heat equation
\begin{equation}
\frac{\partial}{\partial t}v=-\triangle v+Rv,
\end{equation}
with initial value $v(T)=u^2(T)$. By maximal principle we have $v>0$ for all $0\le t<T$. Put $u(t)=\sqrt{v(t)}$ when $t<T$. Then we have
\begin{equation}
\int_Mu(t)^2dv_{g(t)}=1,
\end{equation}
and the Li-Yau-Perelman Harnack inequality, cf. \cite[(6.3.30)]{ZhQi} or \cite[Theorem 4.2]{Wa17},
\begin{equation}\label{L-Y-H-P harnack}
\tau(Ru-4\triangle u)-u\log u^2-\big(\mu_\Omega(g(T),\tau_0)+\frac{m}{2}\log(4\pi\tau)+m\big)\cdot u\le 0,
\end{equation}
at any time $0\le t<T$. Now, at any time $t\in[T-r^2,T]$ we let $\eta\in C_0^\infty(B_{g(t)}(x_0,r))$ be a cut-off function such that
\begin{equation}
0\le\eta\le 1,\,\eta\equiv 1\mbox{ on }B_{g(t)}(x_0,2^{-1}r),
\end{equation}
and $\|\nabla\eta\|_{g(t)}\le 4r^{-1}$. We put $\tilde{u}=\delta^{-1}\eta u(t)$ where $\delta=\|\eta u(t)\|_{L^2(g(t))}$ such that
$$\tilde{u}\in C_0^\infty\big(B_{g(t)}(x_0,r)\big)\,\mbox{ and }\int_M\tilde{u}^2dv_{g(t)}=1.$$
Obviously we have $\delta^2\ge\int_{B_{g(t)}(x_0,\frac{r}{2})}u^2(t)dv_{g(t)}$. Then,
by definition,
\begin{equation}
\mathcal{W}\big(g(t),\tilde{u},\tau\big)=\tau\int_M(R\tilde{u}^2+4|\nabla \tilde{u}|^2)dv-\int_M \tilde{u}^2\log \tilde{u}^2dv-\frac{m}{2}\log(4\pi\tau)-m\nonumber.
\end{equation}
By a straightforward calculation,
\begin{eqnarray}
\mathcal{W}\big(g(t),\tilde{u},\tau\big)
&=&\int_M \delta^{-2}\eta^2\big[\tau(Ru^2-4u\triangle u)-u^2\log u^2\big]+4\tau \delta^{-2}\int_M u^2|\nabla \eta|^2\nonumber\\
&&-\int_M\delta^{-2}\eta^2\log(\delta^{-2}\eta^2)\cdot u^2-\frac{m}{2}\log(4\pi\tau)-m\nonumber.
\end{eqnarray}
Then, by the Li-Yau-Perelman Harnack inequality (\ref{L-Y-H-P harnack}),
\begin{equation}
\mathcal{W}\big(g(t),\tilde{u},\tau\big)\le\mu_\Omega(g(T),\tau_0)+4\tau \delta^{-2}\int_M u^2|\nabla \eta|^2-\int_M\delta^{-2}\eta^2\log(\delta^{-2}\eta^2)\cdot u^2+1\nonumber.
\end{equation}
Using the trivial fact $-x\log x\le 1$ for any $x>0$, we obtain the following lemma which is essentially Theorems 5.1 in \cite{Wa17}.

\begin{lemma}
Under above assumption we have that
\begin{equation}\label{relative volume ratio: 2}
\mu_{B}\big(g(t),\tau(t)\big)\le\mu_\Omega(g(T),\tau_0)+200\cdot\bigg(\int_{B_{g(t)(x_0,\frac{r}{2})}}u^2(t)dv_{g(t)}\bigg)^{-1}
\end{equation}
whenever $T-r^2\le t\le T$ and $\tau_0\le r^2$, where $B=B_{g(t)}(x_0,r)$ and $\Omega=B_{g(T)}(x_0,Ar)$.
\end{lemma}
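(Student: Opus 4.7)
The plan is to complete the calculation already begun before the lemma statement. The test function $\tilde u=\delta^{-1}\eta u(t)$ is admissible for the local entropy $\mu_B(g(t),\tau(t))$, since it is compactly supported in $B=B_{g(t)}(x_0,r)$ and $L^2$-normalized by construction. Hence $\mu_B(g(t),\tau(t))\le\mathcal{W}(g(t),\tilde u,\tau(t))$, and the inequality
\[
\mathcal{W}(g(t),\tilde u,\tau)\le\mu_\Omega(g(T),\tau_0)+4\tau\delta^{-2}\!\int_M u^2|\nabla\eta|^2\,dv-\int_M\delta^{-2}\eta^2\log(\delta^{-2}\eta^2)\,u^2\,dv+1
\]
derived above via the Li--Yau--Perelman Harnack inequality (\ref{L-Y-H-P harnack}) reduces the task to controlling the two error terms on the right-hand side.

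For the gradient term, the time range $T-r^2\le t\le T$ and the choice $\tau_0\le r^2$ force $\tau(t)\le 2r^2$, while the cut-off satisfies $|\nabla\eta|^2\le 16r^{-2}$ pointwise. Combined with the normalization $\int_M u^2\,dv_{g(t)}=1$ (which follows from the backward conjugate heat equation and the initial condition), one obtains
\[
4\tau\!\int_M u^2|\nabla\eta|^2\,dv\;\le\;4\cdot 2r^2\cdot 16r^{-2}\;=\;128,
\]
so the gradient error contributes at most $128\delta^{-2}$. For the logarithmic term, the pointwise bound $-x\log x\le e^{-1}$ applied with $x=\delta^{-2}\eta^2$ yields
\[
-\!\int_M\delta^{-2}\eta^2\log(\delta^{-2}\eta^2)\,u^2\,dv\;\le\;e^{-1}\!\int_M u^2\,dv\;=\;e^{-1}.
\]

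Putting these estimates together,
\[
\mu_B(g(t),\tau(t))\;\le\;\mu_\Omega(g(T),\tau_0)+128\delta^{-2}+e^{-1}+1\;\le\;\mu_\Omega(g(T),\tau_0)+200\delta^{-2},
\]
where in the last step we used $\delta^{-2}\ge 1$, which follows from $\delta^2=\int_M\eta^2 u^2\,dv\le\int_M u^2\,dv=1$, to absorb the additive constants into the leading $\delta^{-2}$ term. Since $\eta\equiv 1$ on $B_{g(t)}(x_0,r/2)$ gives $\delta^2\ge\int_{B_{g(t)}(x_0,r/2)}u^2\,dv_{g(t)}$, the stated inequality follows. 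The substantial work in this argument lies not in this final bookkeeping but in the two ingredients already invoked above: the localized Li--Yau--Perelman Harnack (\ref{L-Y-H-P harnack}), and the verification that the backward-evolved $u(t)=\sqrt{v(t)}$ remains positive and $L^2$-normalized. Granting these, no substantive obstacle remains.
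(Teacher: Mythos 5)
Your proof is correct and follows the same route the paper takes: plug the admissible test function $\tilde u=\delta^{-1}\eta u(t)$ into $\mu_B$, invoke the already-derived consequence of the Li--Yau--Perelman Harnack inequality, bound the gradient error by $\tau\le 2r^2$ and $|\nabla\eta|^2\le 16r^{-2}$ to get $128\delta^{-2}$, bound the logarithmic error pointwise by $-x\log x\le e^{-1}$ (the paper uses the coarser $\le 1$, which makes no difference), and absorb the additive constants using $\delta^{-2}\ge 1$ and $\delta^2\ge\int_{B_{g(t)}(x_0,r/2)}u^2\,dv_{g(t)}$. The bookkeeping lands safely under the constant $200$, exactly as in the paper.
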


The remaining question is how to estimate the integration $\int_{B_{g(t)}(x_0,\frac{r}{2})}u^2(t)dv_{g(t)}$ from below. This is the full motivation of the following section on the heat kernel estimate.

\section{Heat kernel lower bound to the conjugate heat equation}

The heat kernel estimate is one important topic in the study of Ricci flow and many remarkable applications have been found, cf. \cite{Ba16, BaRiWi17, BaZh15-1, BaZh15-2, CaZh, HeNa13, KuZhQ, Pe02, ZhQ07, ZhQ10, ZhQ} etc. In all these works the non-collapsing assumption is essential. In this section we establish a partial Li-Yau type heat kernel estimate where the volumes of metric balls involve. It is the crucial technique of this paper and can be applied to the Ricci flow with a collapsing structure.

In this section we assume the space-time scale is 1. Let $g(t)$, $0\le t\le 1$, be a Ricci flow on a compact $m$-manifold $M$. Let $B_0=B_{g(0)}(x_0,1)$ be a metric ball at the initial time $t=0$, centered at a point $x_0$, such that $\partial B\neq\emptyset$. Assume that
\begin{equation}\label{Ricci: normalized}
|\Ric|\le 1,\,\mbox{ on }B_0\times[0,1].
\end{equation}
Then, by the Ricci flow equation we have
\begin{equation}\label{e: 301}
e^{-1}\cdot g(t)\le g(1)\le e\cdot g(t),\,\mbox{ on } B_0\times[0,1].
\end{equation}
So,
\begin{equation}
B_{g(t)}(x_0,e^{-2})\subset B_{g(s)}(x_0,e^{-1})\subset B_0,\,\forall t,s\in[0,1].
\end{equation}

Let $H(x,t';y,t)$, $0\le t'<t\le 1$, be the heat kernel to the conjugate heat equation which satisfies
\begin{equation}
-\frac{\partial}{\partial t'}H=\triangle_{g(t'),x}H-R(x,t')\cdot H
\end{equation}
with initial value
\begin{equation}
\lim_{t'\nearrow t}H(x,t';y,t)=\delta_{g(t),y}(x),
\end{equation}
and
\begin{equation}
\frac{\partial}{\partial t}H=\triangle_{g(t),y}H,
\end{equation}
with initial value
\begin{equation}
\lim_{t\searrow t'}H(x,t';y,t)=\delta_{g(t'),x}(y)
\end{equation}
where $\delta_{g(t),x}$ is the Dirac function concentrated at $(x,t)$ with respect to the Riemannian measure of $g(t)$. For the existence of the heat kernel and its estimates under Ricci flow we refer to \cite{Chetal} \cite{ZhQi} and the references therein.

In the following we will use $d_t$ to denote the distance function of $g(t)$.

\subsection{Heat kernel integral bound}

In this subsection we adopt Li-Yau argument \cite{LiYa86}, following the idea of Cheeger-Yau \cite{ChYa81}, to derive an integral lower bound of $H$. More references for Li-Yau estimate are \cite{Li} \cite{SchYa}.

Let $\varphi:\mathbb{R}\rightarrow[0,\infty)$ be a smooth function satisfying
\begin{equation}
\varphi(r)=1,\,\forall r\le e^{-5};\;\varphi(r)=0,\,\forall r\ge e^{-4};\,\varphi'\le 0.
\end{equation}
The function
\begin{equation}
u(y,t)=\int_MH(x,0;y,t)\cdot\varphi(d_0(x_0,x))~dv_{g(0)}(x)
\end{equation}
satisfies the forward heat equation
\begin{equation}
\frac{\partial}{\partial t}u=\triangle_{g(t)} u
\end{equation}
with initial value
$$\lim_{t\rightarrow 0}u(y,t)=\varphi(d_0(x_0,y)).$$
We define the comparison function in the Euclidean space $\mathbb{R}^m$ as follows
\begin{equation}
\bar{u}(\zeta,t)=:\int_{\mathbb{R}^m}(4\pi t)^{-m/2}e^{-\frac{|\xi|^2}{4t}}\varphi(|\xi-\zeta|)d\xi.
\end{equation}
The function $\bar{u}$ is trivially a radial function in the space factor. So $\bar{u}$ determines a function $\bar{u}(r,t)$ for any $r\ge 0$ and $t\ge 0$, by simply setting $\bar{u}(r,t)=\bar{u}(|\zeta|,t)$ whenever $r=|\zeta|$. In the following we use $\bar{u}_t$ and $\bar{u}_r$ to denote the derivatives in the variables $t$ and $r$ respectively. It is easy to check that
\begin{equation}\label{e: 311}
\bar{u}_{r}(r,t)\le 0,
\end{equation}
for any $r,t\ge 0$. Moreover, $\bar{u}$ satisfies the heat equation on $\mathbb{R}^m$
\begin{equation}\nonumber
\bar{u}_t=\bar{u}_{rr}+\frac{n-1}{r}\cdot \bar{u}_r.
\end{equation}
It follows that
\begin{equation}\label{e: 312}
\bar{u}_t\le\bar{u}_{rr}.
\end{equation}
Now we define the comparison function on $M$,
\begin{equation}
\bar{u}(y,t)=\bar{u}(d_t(x_0,y),t),\,\forall y\in M.
\end{equation}
It satisfies the initial condition
\begin{equation}\nonumber
\bar{u}(y,0)=\varphi(d_0(x_0,y)),\,\forall y\in M.
\end{equation}

\begin{lemma}
Assume {\rm(\ref{Ricci: normalized})}. Then the function $\bar{u}$ satisfies
\begin{equation}\label{e: 315}
\frac{\partial}{\partial t}\bar{u}\le\triangle_{g(t)}\bar{u}+C(m)
\end{equation}
in the barrier sense.
\end{lemma}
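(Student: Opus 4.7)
The plan is to verify the inequality pointwise at points where $d_t(x_0,\cdot)$ is smooth by a chain-rule computation, then invoke the two key Euclidean properties \eqref{e: 311} and \eqref{e: 312}, then reduce the remaining term to the classical Laplacian/first-variation estimate for $d_t$ under the Ricci bound \eqref{Ricci: normalized}, and finally handle the cut locus and the base point by the standard upper-barrier construction.

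First I would fix $(y,t)$ with $y\neq x_0$ and $y$ outside the cut locus of $x_0$ with respect to $g(t)$. Writing $r=d_t(x_0,y)$ and using $|\nabla d_t|^2=1$ where $d_t$ is smooth, the chain rule gives
\begin{equation*}
\partial_t\bar u\,=\,\bar u_r\,\partial_t d_t+\bar u_t,\qquad
\triangle_{g(t)}\bar u\,=\,\bar u_{rr}+\bar u_r\,\triangle_{g(t)} d_t,
\end{equation*}
so that
\begin{equation*}
(\partial_t-\triangle_{g(t)})\bar u\,=\,(\bar u_t-\bar u_{rr})\,+\,\bar u_r\bigl(\partial_t d_t-\triangle_{g(t)} d_t\bigr).
\end{equation*}
By the Euclidean radial heat equation used to derive \eqref{e: 312}, $\bar u_t-\bar u_{rr}=\tfrac{m-1}{r}\bar u_r$, which is $\le 0$ in view of \eqref{e: 311}. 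Consequently
\begin{equation*}
(\partial_t-\triangle_{g(t)})\bar u\,\le\,\bar u_r\bigl(\partial_t d_t-\triangle_{g(t)} d_t\bigr),
\end{equation*}
and since $\bar u$ is the Euclidean heat flow of the fixed smooth compactly supported profile $\varphi(|\cdot|)$, differentiating under the convolution gives $|\bar u_r|\le\|\varphi'\|_\infty=:C_1(m)$.

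It then suffices to show $(\partial_t-\triangle_{g(t)})d_t(x_0,y)\ge -C_2(m)$, because combined with $\bar u_r\le 0$ this yields the desired pointwise bound with $C(m)=C_1(m)C_2(m)$. This lower bound is the content of Perelman's distance distortion estimate (Lemma 8.3 of \cite{Pe02}): under $|\Ric|\le 1$ on balls of a definite radius around $x_0$ and $y$ at time $t$, the first variation formula $\partial_t d_t=-\int_\gamma\Ric(\dot\gamma,\dot\gamma)\,ds$ combined with a cut-off second-variation argument bounds $\partial_t d_t-\triangle_{g(t)} d_t$ from below by $-C(m)$. The hypothesis \eqref{Ricci: normalized} together with \eqref{e: 301} gives $B_{g(t)}(x_0,e^{-2})\subset B_0$, providing the required Ricci bound on a uniform ball around $x_0$; the Ricci bound on a small ball around $y$ is likewise furnished by \eqref{Ricci: normalized} in the regime relevant for the comparison argument (the complementary regime is handled by the localization of $\bar u$ via $\varphi$).

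Finally, to pass to the barrier sense at the cut locus of $x_0$ or at $x_0$ itself, I would use the Calabi upper-barrier trick: for each such point $y$, perturb $x_0$ to $\tilde x_0=\gamma(\varepsilon)$ along a minimizing geodesic from $x_0$ to $y$, so that $d_t(\tilde x_0,y)=d_t(x_0,y)-\varepsilon$ is smooth near $y$ and the triangle inequality yields $d_t(\tilde x_0,\cdot)+\varepsilon\ge d_t(x_0,\cdot)$ near $y$; since $\bar u_r\le 0$, composition with $\bar u$ reverses the inequality and produces a smooth upper barrier for $\bar u(d_t(x_0,\cdot),t)$ at $y$, to which Steps 1--4 apply. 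The main obstacle is Step 4, namely guaranteeing the uniform lower bound on $(\partial_t-\triangle_{g(t)})d_t$ using only the local-in-space Ricci hypothesis \eqref{Ricci: normalized}; this requires the cut-off vector field version of Perelman's Lemma 8.3 so that only Ricci control near the two endpoints of the geodesic is invoked, rather than control along the entire minimizing geodesic.
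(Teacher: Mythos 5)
Your decomposition of $(\partial_t-\triangle_{g(t)})\bar u$ into the $(\bar u_t-\bar u_{rr})$ part and the $\bar u_r(\partial_t d_t-\triangle d_t)$ part is the same starting point as the paper's, and the first part is handled the same way. The gap is in Step 4. You want a \emph{uniform} bound $(\partial_t-\triangle_{g(t)})d_t\ge -C_2(m)$ and then close the estimate with the crude bound $|\bar u_r|\le\|\varphi'\|_\infty$. But the uniform lower bound on $\partial_t d_t-\triangle d_t$ is false near the base point: as $r=d_t(x_0,y)\to 0$, the Laplacian comparison gives only $\triangle d_t\le C(m)/r$, and indeed $\triangle d_t$ genuinely behaves like $(m-1)/r$, so $\partial_t d_t-\triangle d_t\to-\infty$. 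Perelman's Lemma 8.3(a) requires $d_t(x_0,y)\ge 2r_0$ to run the cut-off second-variation argument; its constant is $\sim r_0^{-1}$, so it cannot produce a bound independent of $r$ as $r\to 0$. Your parenthetical remark that ``the complementary regime is handled by the localization of $\bar u$ via $\varphi$'' does not rescue this: after positive time the heat-evolved profile $\bar u(\cdot,t)$ is supported on all of $\mathbb R^m$, and more to the point, the problematic regime is small $r$ (near $x_0$), where $\varphi$ is identically $1$ and offers no localization at all.

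The paper avoids this by a two-case argument. For $r\ge e^{-1}$ it applies Perelman's Lemma 8.3(a) (now legitimate because $r$ is bounded below) and uses $-\bar u_r\le C$. For $r<e^{-1}$ it instead uses the \emph{refined} derivative estimate $-\bar u_r(r,t)\le C(m)\,r$, which follows from $\bar u(\cdot,t)$ being smooth and radially symmetric so that $\bar u_r(0,t)=0$ with bounded second derivatives; this factor of $r$ exactly cancels the $1/r$ singularity in $\triangle d_t$. Simultaneously, for small $r$ the first-variation formula gives $\partial_t d_t\ge -r$, so the time-derivative term is also controlled. Your plan is missing the observation that $\bar u_r=O(r)$ near $r=0$, and without it the estimate does not close. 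Everything else in your sketch (the chain-rule computation, the use of \eqref{e: 311} and \eqref{e: 312}, and the Calabi barrier trick at the cut locus) is correct and matches the paper's strategy.
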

\begin{proof}
The Laplacian at time $t$ is given by
\begin{eqnarray}\label{e: 313}
\triangle\bar{u}=\bar{u}_r\cdot\triangle d_t+\bar{u}_{rr};
\end{eqnarray}
the derivation in $t$ is given by
\begin{equation}\label{e: 314}
\frac{\partial}{\partial t}\bar{u}=\bar{u}_t+\bar{u}_r\cdot\frac{\partial d_t}{\partial t}.
\end{equation}

We discuss two independent cases according to $r\ge e^{-1}$ or not. When $r\ge e^{-1}$, one can apply Lemma 8.3 (a) in \cite{Pe02}, together with the assumption (\ref{Ricci: normalized}), to derive
$$\frac{\partial d_t}{\partial t}\ge\triangle d_t-2.$$
Thus, together with (\ref{e: 311}) and (\ref{e: 312}), the formulas (\ref{e: 313}) and (\ref{e: 314}) yield
$$\frac{\partial}{\partial t}\bar{u}\le\triangle\bar{u}+\bar{u}_t-\bar{u}_{rr}-2\bar{u}_r\le\triangle\bar{u}
-2\bar{u}_r.$$
Now the required estimate (\ref{e: 315}) follows from the estimate
$$-\bar{u}_r(r,t)\le\int_{\mathbb{R}^m}(4\pi t)^{-m/2}e^{-\frac{|\xi|^2}{4t}}|\varphi'|(|\zeta-\xi|)d\xi\le C$$
where $C$ is a universal constant, $\zeta$ is any point with $|\zeta|=r$.

When $r< e^{-1}$, one can use the local Taylor expansion to get
$$-\bar{u}_r(r,t)\le C(m)\cdot r.$$
On the other hand, since $\Ric\ge-1$ on $B_{g(t)}(x_0,e^{-1})$, the Laplacian comparison gives $\triangle d_t\le\frac{C(m)}{r}$ in the barrier sense. Substituting the estimates into (\ref{e: 313}) to get
$$\triangle\bar{u}\ge \bar{u}_{rr}-C(m)$$
in the barrier sense. Then notice that the term $\frac{\partial d_t}{\partial t}$ in (\ref{e: 314}) admits the estimate
$$\frac{\partial d_t}{\partial t}(x,t)\ge\inf\bigg\{-\int_\gamma\Ric(\dot{\gamma},\dot{\gamma})\bigg|\gamma\mbox{ is minimal geodesic connecting $x_0$ and $x$}\bigg\}\ge-r.$$
Thus,
\begin{equation}\nonumber
\frac{\partial}{\partial t}\bar{u}\le \bar{u}_t+C(m)\le\bar{u}_{rr}+C(m)\le\triangle\bar{u}+C(m)
\end{equation}
in the barrier sense.
\end{proof}

The function $\bar{\bar{u}}(y,t)=\bar{u}(y,t)-Ct$ satisfies
\begin{equation}\nonumber
\frac{\partial}{\partial t}\bar{\bar{u}}\le\triangle_{g(t)}\bar{\bar{u}}
\end{equation}
and the initial condition $\bar{\bar{u}}(y,0)=\varphi(d_0(x_0,y))$. By maximum principle we have
\begin{equation}
u(y,t)\ge\bar{\bar{u}}(y,t),\,\forall (y,t)\in M\times[0,1].
\end{equation}
By approximation we may choose $\varphi$ as the characteristic function on $(-\infty,e^{-4}]$. In particular we have the following consequence.

\begin{corollary}
Under the assumption {\rm(\ref{Ricci: normalized})} we have
\begin{equation}
\int_{B_{g(0)}(x_0,e^{-4})}H(x,0;y,t)~dv_{g(0)}(x)\ge\int_{B(\zeta,e^{-4})}(4\pi t)^{-m/2}e^{-\frac{|\xi|^2}{4t}}d\xi-C(m)\cdot t,
\end{equation}
where $\zeta\in\mathbb{R}^m$ is a point with $|\zeta|=d_t(x_0,y)$ and $B(\zeta,e^{-4})$ is the metric ball of radius $e^{-4}$ centered at $\zeta$ in $\mathbb{R}^m$.
\end{corollary}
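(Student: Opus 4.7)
The corollary should follow from the preceding lemma combined with the parabolic maximum principle and an approximation argument. Here is the plan.

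First, for a fixed smooth $\varphi$ satisfying the hypotheses of the preceding lemma, I would set $\bar{\bar u}(y,t) := \bar u(y,t) - C(m)t$ with $C(m)$ the constant from (\ref{e: 315}). By the lemma, $\frac{\partial}{\partial t}\bar{\bar u} \le \triangle_{g(t)}\bar{\bar u}$ in the barrier sense, and $\bar{\bar u}(\cdot, 0) = \varphi(d_0(x_0, \cdot)) = u(\cdot, 0)$. Since $u$ solves the genuine forward heat equation $\frac{\partial}{\partial t}u = \triangle_{g(t)}u$ on the compact manifold $M$ with the same initial datum, the parabolic maximum principle (which tolerates barrier inequalities on the supersolution side) applied to $u - \bar{\bar u}$ yields $u(y,t) \ge \bar u(y,t) - C(m)t$ for all $(y,t) \in M \times [0,1]$.

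Next, I would pick a sequence of smooth nonincreasing cutoffs $\varphi_k$ with $\varphi_k = 1$ on $(-\infty, e^{-4}-1/k]$, $\varphi_k = 0$ on $[e^{-4}, \infty)$, and $\varphi_k' \le 0$. Each $\varphi_k$ satisfies the hypotheses of the lemma (replacing the threshold $e^{-5}$ by $e^{-4}-1/k$ is harmless, as the proof only used $\operatorname{supp}\varphi \subseteq (-\infty, e^{-4}]$ and $\varphi' \le 0$). Apply the estimate above with $\varphi = \varphi_k$ and send $k \to \infty$. By monotone (and dominated) convergence the left-hand side tends to $\int_{B_{g(0)}(x_0, e^{-4})} H(x,0;y,t)\,dv_{g(0)}(x)$, while
$$\bar u_k(\zeta,t) = \int_{\mathbb{R}^m}(4\pi t)^{-m/2}e^{-|\xi|^2/(4t)}\varphi_k(|\xi - \zeta|)\,d\xi \longrightarrow \int_{B(\zeta, e^{-4})}(4\pi t)^{-m/2}e^{-|\xi|^2/(4t)}\,d\xi,$$
which produces the desired inequality after subtracting $C(m)t$.

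The main technical obstacle, and the step I would approach most carefully, is to ensure the constant $C(m)$ is uniform in the approximating sequence $\varphi_k$. Tracing through the lemma's proof, $C(m)$ arises via a bound on $-\bar u_r$. For $r \ge e^{-1}$ the support of $|\varphi_k'|$ lies in the shell $|\xi - \zeta| \in [e^{-4}-1/k, e^{-4}]$, forcing $|\xi| \ge e^{-1} - e^{-4}$; the Gaussian factor $(4\pi t)^{-m/2}e^{-|\xi|^2/(4t)}$ is then uniformly bounded in $t \in (0,1]$ and absorbs the total $L^1$-mass of $|\varphi_k'|$ (which equals $1$) independent of $k$. For $r < e^{-1}$ the function $\bar u_k$ on $\mathbb{R}^m$ is the convolution of a Gaussian with a function of $L^\infty$ norm $\le 1$, hence is smooth with derivatives at the origin controlled uniformly in $k$, giving the Taylor bound $-\bar u_{k,r}(r,t) \le C(m)r$ uniformly. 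These two uniform bounds close the limit argument.
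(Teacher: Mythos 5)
Your overall architecture matches the paper exactly: subtract $C(m)t$ to obtain a subsolution $\bar{\bar u}$, apply the parabolic maximum principle to compare $u$ and $\bar{\bar u}$, then push the cutoff $\varphi_k$ toward the characteristic function of $(-\infty,e^{-4}]$. The paper does precisely this, and your step (a) --- bounding the contribution for $r\ge e^{-1}$ by the uniform smallness of the Gaussian on $\{|\xi|\ge e^{-1}-e^{-4}\}$ together with the $k$-independent bound $\int_{\mathbb R^m}|\varphi_k'|(|\zeta-\xi|)\,d\xi\le \omega_{m-1}(e^{-4})^{m-1}$ --- is sound.

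The gap is in step (b). You claim that because $\bar u_k=G_t*\Phi_k$ with $\|\Phi_k\|_{L^\infty}\le 1$, the derivatives of $\bar u_k$ at the origin are controlled uniformly in $k$ and this gives $-\bar u_{k,r}(r,t)\le C(m)r$ uniformly for $r<e^{-1}$. This is false as a uniform statement on $r\in(0,e^{-1})$. The Taylor argument needs $\bar u_{k,rr}$ bounded on the \emph{entire} interval $[0,r]$, not just at $r=0$. Near $r=e^{-4}$ the limiting function $\bar u^\infty(\zeta,t)=\int_{B(\zeta,e^{-4})}G_t(\xi)\,d\xi$ develops a transition layer of width $\sqrt t$: one computes
\begin{equation*}
\nabla_\zeta \bar u^\infty(\zeta,t)=-\int_{\partial B(0,e^{-4})} G_t(\zeta-\eta)\,\nu(\eta)\,d\sigma(\eta),
\end{equation*}
and for $|\zeta|=e^{-4}$ this surface integral is of size $t^{-1/2}$, so $-\bar u^\infty_r(e^{-4},t)\sim t^{-1/2}$, which dwarfs $C(m)\,e^{-4}$ for small $t$. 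Correspondingly, $-\bar u_{k,r}$ near $r=e^{-4}$ is of size $\min(k, t^{-1/2})$ and cannot satisfy $-\bar u_{k,r}\le C(m)r$ with $C(m)$ independent of $k$. This breaks the barrier inequality $\partial_t \bar u_k \le \triangle_{g(t)}\bar u_k + C(m)$ with a $k$-uniform constant, which is exactly what your limiting argument requires. To be fair, the paper's one-sentence ``by approximation we may choose $\varphi$ as the characteristic function'' glosses over the same issue, so you are following the paper faithfully; but since your task was to supply the missing details, you need to either weaken the conclusion (e.g.\ replace $-C(m)t$ by $-C(m)\sqrt t$, which is compatible with the crude bound $-\bar u^\infty_r\lesssim t^{-1/2}$ and still suffices for Corollary 3.3), or avoid the characteristic-function limit entirely: a single fixed smooth nonincreasing $\varphi$ equal to $1$ on $(-\infty,\tfrac{9}{10}e^{-4}]$ and $0$ on $[e^{-4},\infty)$ already gives $\int_{B_{g(0)}(x_0,e^{-4})}H\ge \bar u(\zeta,t)-C(m)t\ge \int_{B(\zeta,\frac{9}{10}e^{-4})}G_t - C(m)t$, which is all the later arguments of the paper actually use.
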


In particular, when $d_t(x_0,y)\le e^{-5}$ the integration part on the right hand side tends to 1 as $t\rightarrow 0$. So we have

\begin{corollary}
Under the assumption {\rm(\ref{Ricci: normalized})} we have
\begin{equation}\label{e: 316}
\int_{B_{g(0)}(x_0,e^{-4})}H(x,0;y,t)~dv_{g(0)}(x)\ge\frac{1}{2},
\end{equation}
whenever $d_t(x_0,y)\le e^{-5}$ and $0\le t\le t_0$, where $t_0=t_0(m)\le\frac{1}{200}$ is a positive constant.
\end{corollary}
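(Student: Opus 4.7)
The plan is to deduce this corollary almost immediately from the preceding one, which already provides the lower bound
\begin{equation*}
\int_{B_{g(0)}(x_0,e^{-4})}H(x,0;y,t)\,dv_{g(0)}(x)\ge\int_{B(\zeta,e^{-4})}(4\pi t)^{-m/2}e^{-\frac{|\xi|^2}{4t}}d\xi-C(m)\,t,
\end{equation*}
with $\zeta\in\mathbb{R}^m$ satisfying $|\zeta|=d_t(x_0,y)$. All that remains is to show the Euclidean Gaussian integral on the right is close to $1$ when $t$ is small, uniformly in $y$ with $d_t(x_0,y)\le e^{-5}$, and that the error term $C(m)t$ is negligible for $t$ small.

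First I would note that $|\zeta|\le e^{-5}$ implies, by the triangle inequality, the Euclidean inclusion $B(0,e^{-4}-e^{-5})\subset B(\zeta,e^{-4})$. Since the Gaussian density is nonnegative and radially decreasing in the wrong variable (but the bound is via set inclusion), this gives
\begin{equation*}
\int_{B(\zeta,e^{-4})}(4\pi t)^{-m/2}e^{-\frac{|\xi|^2}{4t}}d\xi\;\ge\;\int_{B(0,e^{-4}-e^{-5})}(4\pi t)^{-m/2}e^{-\frac{|\xi|^2}{4t}}d\xi.
\end{equation*}
Substituting $\xi=2\sqrt{t}\,\eta$ transforms the right-hand side into
\begin{equation*}
\int_{B\left(0,\,(e^{-4}-e^{-5})/(2\sqrt{t})\right)}\pi^{-m/2}e^{-|\eta|^2}d\eta,
\end{equation*}
which is the standard Gaussian probability of a ball of radius $(e^{-4}-e^{-5})/(2\sqrt{t})$ in $\mathbb{R}^m$. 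As $t\searrow 0$, this radius tends to $+\infty$ and the integral tends to $\int_{\mathbb{R}^m}\pi^{-m/2}e^{-|\eta|^2}d\eta=1$.

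Therefore I can choose $t_0=t_0(m)>0$, with $t_0\le 1/200$, small enough so that simultaneously
\begin{equation*}
\int_{B\left(0,\,(e^{-4}-e^{-5})/(2\sqrt{t_0})\right)}\pi^{-m/2}e^{-|\eta|^2}d\eta\ge\tfrac{3}{4}\quad\text{and}\quad C(m)\,t_0\le\tfrac{1}{4}.
\end{equation*}
Combining these with the inequality from the previous corollary yields the desired lower bound of $\tfrac{1}{2}$ for all $0\le t\le t_0$ and $d_t(x_0,y)\le e^{-5}$. There is no conceptual obstacle here: all the substantive work (the barrier inequality $\partial_t\bar u\le\Delta\bar u+C$ and the maximum principle comparison with the Euclidean heat kernel) was done in the preceding lemma and corollary; this final step is only a quantitative choice of $t_0$ depending on $m$.
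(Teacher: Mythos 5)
Your proposal is correct and follows the same route as the paper: the paper simply asserts that the Gaussian integral on the right-hand side of the preceding corollary tends to $1$ as $t\to 0$ (uniformly for $|\zeta|\le e^{-5}$), and you fill in the elementary details — the ball inclusion $B(0,e^{-4}-e^{-5})\subset B(\zeta,e^{-4})$ and the change of variables $\xi=2\sqrt{t}\,\eta$ — which make that assertion precise and also give the required uniformity in $y$ and monotonicity in $t$.
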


\begin{remark}
For static metric case, the local curvature condition (\ref{Ricci: normalized}) is not sufficient to derive the heat kernel lower bound (\ref{e: 316}).
\end{remark}

A same argument by replacing the time 0 by $t'<1$ gives the following

\begin{corollary}\label{Cheeger-Yau}
Under the assumption {\rm(\ref{Ricci: normalized})} we have
\begin{equation}\label{e: 317}
\int_{B_{g(t')}(x_0,e^{-4})}H(x,t';y,t)~dv_{g(t')}(x)\ge\frac{1}{2},
\end{equation}
whenever $d_{t}(x_0,y)\le e^{-5}$ and $0\le t'< t\le t'+t_0$, where $t_0=t_0(m)\le\frac{1}{200}$ is a positive constant depending only on the dimension $m$.
\end{corollary}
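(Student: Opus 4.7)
The plan is to repeat the proof of Corollary 3.4 verbatim, with the initial time $0$ replaced by an arbitrary $t' \in [0, 1)$, exploiting the fact that the Ricci hypothesis (\ref{Ricci: normalized}) is a statement on the product domain $B_0 \times [0,1]$ and is therefore invariant under time translation within $[0,1]$.

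Concretely, for fixed $t' \in [0, 1)$ I would introduce the forward heat solution
\begin{equation}
u(y, t) = \int_M H(x, t'; y, t)\, \varphi(d_{t'}(x_0, x))\, dv_{g(t')}(x),
\end{equation}
defined for $t \in (t', 1]$, where $\varphi$ is the same cut-off used in Section 3.1. It solves $\partial_t u = \triangle_{g(t)} u$ with initial datum $\varphi(d_{t'}(x_0, \cdot))$ at $t = t'$. As Euclidean comparison I would take $\bar u(\zeta, s)$ with $s := t - t' \ge 0$, defined by convolving the Euclidean heat kernel with the same $\varphi$; this is again a radial function $\bar u(r, s)$ satisfying $\bar u_r \le 0$ and $\bar u_s \le \bar u_{rr}$. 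Pulling it back to $M$ via $\bar u(y, t) := \bar u(d_t(x_0, y), t - t')$ gives a barrier candidate on $M \times [t', 1]$.

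The key step is to reproduce the inequality $\partial_t \bar u \le \triangle_{g(t)} \bar u + C(m)$ in the barrier sense. The proof is identical to that of Lemma 3.1: split into the cases $r \ge e^{-1}$ (where Perelman's Lemma 8.3(a) from \cite{Pe02} provides $\partial_t d_t \ge \triangle d_t - 2$, using that the relevant minimizing geodesic at time $t$ sits in $B_0$ by (\ref{e: 301})) and $r < e^{-1}$ (where the Laplacian comparison $\triangle d_t \le C(m)/r$ follows from $\Ric \ge -1$ on $B_{g(t)}(x_0, e^{-1}) \subset B_0$, and $\partial_t d_t \ge -r$ follows from integrating the Ricci bound along a minimizing geodesic of length $r$). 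Both ingredients use only the Ricci bound on $B_0 \times [t', t] \subset B_0 \times [0,1]$, hence hold uniformly in $t'$. The maximum principle applied to $u - (\bar u - C(m)(t - t'))$ then gives
\begin{equation}
u(y, t) \ge \bar u(d_t(x_0, y), t - t') - C(m)(t - t'),\qquad (y, t) \in M \times [t', 1].
\end{equation}

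Finally, approximating $\varphi$ by the characteristic function of $(-\infty, e^{-4}]$ and restricting to $y$ with $d_t(x_0, y) \le e^{-5}$, the Euclidean lower bound becomes the Gaussian mass of $B(\zeta, e^{-4})$ for some $\zeta \in \mathbb{R}^m$ with $|\zeta| \le e^{-5}$, which converges to $1$ as $s = t - t' \to 0^+$. Choosing $t_0 = t_0(m) \le 1/200$ small enough that this mass exceeds $\tfrac{1}{2} + C(m) t_0$ uniformly for $0 < s \le t_0$ yields the desired inequality. The only point to verify carefully is the uniformity of all the comparison estimates in the shifted time parameter, which, as noted, is immediate from the translation-invariant form of (\ref{Ricci: normalized}); no genuinely new obstacle arises beyond the work already carried out for Corollary 3.4.
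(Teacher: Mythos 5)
Your proposal is correct and takes essentially the same route as the paper: the paper's own treatment of this corollary is the one-sentence remark that ``a same argument by replacing the time $0$ by $t'<1$ gives the following,'' and you correctly identify the key point (the Ricci hypothesis is on the product domain $B_0\times[0,1]$ and hence time-translation invariant on that interval) and carry the shifted argument through in detail.
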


\subsection{Li-Yau heat kernel lower bound}

First of all we recall the Harnack inequality to the conjugate heat kernel which was proved by Kuang-Zhang \cite{KuZhQ}; see also Corollary 6.4.1 in \cite{ZhQi}. We fix the time $t=1-t_0$ and any point $y\in M$. Then we have,
\begin{equation}\label{e: 321}
H(x_2,t_2';y,1-t_0)\le H(x_1,t_1';y;1-t_0)\cdot\bigg(\frac{1-t_0-t_1'}{1-t_0-t_2'}\bigg)^{3m/2}\cdot\exp\bigg(\frac{L(x_1,t_1';x_2,t_2')}{2(t_2'-t_1')}\bigg)
\end{equation}
for any $x_1,x_2$ and $\frac{1}{2}\le t_1'<t_2'< 1-t_0$. Here,
\begin{equation}\label{e: 322}
L(x_1,t_1';x_2,t_2')=\inf_\gamma\int_{t_1'}^{t_2'}\bigg((t_2'-t_1')^2\cdot R(\gamma(t'),t)+4\bigg|\frac{d\gamma}{dt'}\bigg|^2_{g(t')}\bigg)dt'
\end{equation}
where $\gamma$ ranges over all curves connecting $x_1$ and $x_2$.

Combining with the integral lower bound (\ref{e: 317}) we can derive the pointwise Li-Yau type heat kernel lower bound.

\begin{corollary}
Under the assumption {\rm(\ref{Ricci: normalized})}, we have that
\begin{equation}\label{e: 324}
H(x,1-2t_0;y,1-t_0)\ge \frac{c(m)}{\vol_{g(1)}(B_{g(1)}(x_0,e^{-2}))},
\end{equation}
for any $x\in B_{g(1)}(x_0,e^{-2})$ and $y\in B_{g(1-t_0)}(x_0,e^{-5})$, where $t_0=t_0(m)\le\frac{1}{200}$ is the positive constant in Corollary \ref{Cheeger-Yau} and $c(m)$ is a positive constant depending only $m$.
\end{corollary}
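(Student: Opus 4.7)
The strategy is to transfer the integrated lower bound (\ref{e: 317}) of Corollary \ref{Cheeger-Yau} into a pointwise lower bound at $(x, 1-2t_0)$ by means of the Kuang--Zhang Harnack inequality (\ref{e: 321}). To this end, I would fix an intermediate time $t_2' = 1 - \tfrac{3}{2}t_0$, so that with $t_1' = 1-2t_0$ one has $1-t_0-t_1' = t_0$, $1-t_0-t_2' = t_0/2$, and $s := t_2' - t_1' = t_0/2$. All the timing constraints for both (\ref{e: 317}) (namely $t - t' \le t_0$ with $t = 1-t_0$) and for (\ref{e: 321}) (namely $\tfrac{1}{2} \le t_1' < t_2' < 1-t_0$) are satisfied since $t_0 \le 1/200$. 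Applying Corollary \ref{Cheeger-Yau} at $t' = t_2'$ and using the hypothesis $y \in B_{g(1-t_0)}(x_0, e^{-5})$ yields
\begin{equation*}
\int_{B_{g(t_2')}(x_0, e^{-4})} H(z, t_2'; y, 1-t_0)\, dv_{g(t_2')}(z) \ge \tfrac{1}{2},
\end{equation*}
while (\ref{e: 321}) applied with $x_1 = x$, $x_2 = z$, rearranged, gives
\begin{equation*}
H(z, t_2'; y, 1-t_0) \le H(x, 1-2t_0; y, 1-t_0) \cdot 2^{3m/2} \exp\!\Big(\tfrac{L(x, 1-2t_0;\, z, t_2')}{t_0}\Big).
\end{equation*}
Integrating this inequality over $z \in B_{g(t_2')}(x_0, e^{-4})$ and comparing with the integral bound reduces the proof to producing a uniform upper bound on $L(x, 1-2t_0;\, z, t_2')$.

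The core step is to bound $L$ by a constant $C(m)$, uniformly for $x \in B_{g(1)}(x_0, e^{-2})$ and $z \in B_{g(t_2')}(x_0, e^{-4})$. I would estimate $L$ by an explicit test curve: let $\gamma : [t_1', t_2'] \to M$ be the piecewise geodesic obtained by concatenating a minimizing $g(t_1')$-geodesic from $x$ to $x_0$ (traversed with constant $g(t_1')$-speed on $[t_1', \tfrac{t_1'+t_2'}{2}]$) with a minimizing $g(t_2')$-geodesic from $x_0$ to $z$ (on $[\tfrac{t_1'+t_2'}{2}, t_2']$). By the metric equivalence (\ref{e: 301}), the first segment has length $\le e^{-3/2}$ and its image lies in $B_{g(0)}(x_0, e^{-1})$, while the second has length $\le e^{-4}$ and lies in $B_{g(0)}(x_0, e^{-3})$; both are inside $B_0$, so the bound $|R| \le m$ applies along $\gamma$. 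Inserting the length and speed bounds into the definition of $L$, the scalar-curvature contribution is $O(s^3)$ (negligible) and the kinetic contribution is $O(1/s) = O(1/t_0) = O_m(1)$, yielding $L \le C(m)$. The technical care point here is ensuring $\gamma$ remains in $B_0$ where curvature control is available; a naive minimizing geodesic directly from $x$ to $z$ in a single metric could in principle exit $B_0$, which is precisely why I route $\gamma$ through $x_0$ so that each segment stays in a concentric ball of controlled radius.

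Combining the uniform estimate $\exp(L/t_0) \le C(m)$ with the two inequalities above gives
\begin{equation*}
\tfrac{1}{2} \le H(x, 1-2t_0; y, 1-t_0) \cdot C(m) \cdot \vol_{g(t_2')}\!\big(B_{g(t_2')}(x_0, e^{-4})\big).
\end{equation*}
Invoking (\ref{e: 301}) once more, the inclusion $B_{g(t_2')}(x_0, e^{-4}) \subset B_{g(1)}(x_0, e^{-7/2}) \subset B_{g(1)}(x_0, e^{-2})$ holds and $dv_{g(t_2')} \le e^{m/2}\, dv_{g(1)}$, so the right-hand volume is at most $C(m)\,\vol_{g(1)}(B_{g(1)}(x_0, e^{-2}))$. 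Rearranging produces (\ref{e: 324}) with $c(m) = (2 \cdot 2^{3m/2}\, e^{C(m)}\, e^{m/2})^{-1}$.
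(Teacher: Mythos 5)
Your proof is correct and follows the same overall architecture as the paper's: Kuang--Zhang Harnack (\ref{e: 321}) between times $t_1'=1-2t_0$ and $t_2'=1-\tfrac{3}{2}t_0$, combined with the Cheeger--Yau integral lower bound (\ref{e: 317}) applied at $t'=t_2'$, plus metric equivalence (\ref{e: 301}) to convert volumes. The one genuine point of divergence is the test curve used to bound the $L$-functional. The paper takes a single $g(1)$-minimal geodesic directly from $x_1$ to $x_2$, parametrized on $[t_1',t_2']$ with constant $g(1)$-speed; since both endpoints lie in $B_{g(1)}(x_0,e^{-2})$, this geodesic stays in a $g(1)$-ball of radius comparable to $e^{-1}$ about $x_0$, hence in $B_0$, and metric equivalence gives $|\dot\gamma|^2_{g(t')}\le e\,|\dot\gamma|^2_{g(1)}$ pointwise along it. You instead concatenate a $g(t_1')$-minimal geodesic from $x$ to $x_0$ with a $g(t_2')$-minimal geodesic from $x_0$ to $z$. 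This is a more conservative routing that makes the containment in $B_0$ immediate segment by segment, at the cost of a slightly larger numerical constant; the direct geodesic already stays well inside $B_0$ (every point on it lies within $g(1)$-distance $3e^{-2}<1$ of $x_0$), so the detour is unnecessary but harmless. Either way, one gets $L\le C(m)/t_0=C(m)$ and hence $\exp(L/t_0)\le C(m)$, and the rest of the argument is identical. Your explicit tracking of the $2^{3m/2}$ factor and the $e^{m/2}$ volume comparison is a minor refinement of what the paper subsumes into $C(m)$.
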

\begin{proof}
Applying (\ref{e: 321}) with $t_1'=1-2t_0$ and $t_2'=1-\frac{3}{2}t_0$ we get
\begin{equation}\nonumber
H(x_2,1-\frac{3}{2}t_0;y,1-t_0)\le H(x_1,1-2t_0;y,1-t_0)\cdot C(m)\cdot\exp\bigg(\frac{L(x_1,1-2t_0;x_2,1-\frac{3}{2}t_0)}{t_0}\bigg).
\end{equation}
When $x_1,x_2\in B_{g(1)}(x_0,e^{-2})$, we can choose a minimal geodesic connecting them at time $t=1$, say $\gamma:[1-2t_0,1-\frac{3}{2}t_0]\rightarrow M$ with constant speed $\frac{2d_1(x_1,x_2)}{t_0}\le\frac{4}{et_0}$. Then $\gamma$ lies in the domain $B_{g(1)}(x_0,e^{-1})\subset B_0$, so by the uniform equivalence of the metrics $g(t)$ on $B_0$ we have
\begin{eqnarray}
L(x_1,1-2t_0;x_2,1-\frac{3}{2}t_0)&\le&\int_{1-2t_0}^{1-\frac{3}{2}t_0}\bigg(\frac{t_0^2}{4}\cdot R(\gamma(t'),t)+4\bigg|\frac{d\gamma}{dt'}\bigg|^2_{g(t')}\bigg)dt'\nonumber\\
&\le&\int_{1-2t_0}^{1-\frac{3}{2}t_0}\bigg(\frac{mt_0^2}{4}+4e^2\bigg|\frac{d\gamma}{dt'}\bigg|^2_{g(1)}\bigg)dt'\nonumber\\
&\le&C(m).\nonumber
\end{eqnarray}
In follows that
$$H(x_2,1-\frac{3}{2}t_0;y,1-t_0)\le C(m)\cdot H(x_1,1-2t_0;y;1-t_0)$$
for any $x_1,x_2\in B_{g(1)}(x_0,e^{-2})$. Thus, using that $B_{g(1-\frac{3}{2}t_0)}(x_0,e^{-3})\subset B_{g(1)}(x_0,e^{-2})$ we have the integral estimate
\begin{equation}
H(x_1,1-2t_0;y,1-t_0)\ge C(m)^{-1}\cdot\fint_{B_{g(1-\frac{3}{2}t_0)}(x_0,e^{-3})}H(x,1-\frac{3}{2}t_0;y,1-t_0)~dv_{g(1-\frac{3}{2}t_0)}(x)\nonumber.
\end{equation}
Now, by (\ref{e: 317}) we also have
\begin{equation}\nonumber
\int_{B_{g(1-\frac{3}{2}t_0)}(x_0,e^{-4})}H(x,1-\frac{3}{2}t_0;y,1-t_0)~dv_{g(1-\frac{3}{2}t_0)}(x)\ge\frac{1}{2}
\end{equation}
for any $y\in B_{g(1-t_0)}(x_0,e^{-5})$. So,
$$H(x_1,1-2t_0;y,1-t_0)\ge C(m)^{-1}\cdot\vol_{g(1-\frac{3}{2}t_0)}(B_{g(1-\frac{3}{2}t_0)}(x_0,e^{-3}))^{-1}$$
for any $y\in B_{g(1-t_0)}(x_0,e^{-5})$. The required heat kernel lower bound (\ref{e: 324}) follows from the metric equivalence (\ref{e: 301}) at times $t=1-\frac{3}{2}t_0$ and $t=1$ on $B_{g(1)}(x_0,1)$,
\begin{eqnarray}
\vol_{g(1-\frac{3}{2}t_0)}(B_{g(1-\frac{3}{2}t_0)}(x_0,e^{-3}))&\le& e^m\cdot\vol_{g(1)}(B_{g(1-\frac{3}{2}t_0)}(x_0,e^{-3}))\nonumber\\
&\le& e^m\cdot\vol_{g(1)}(B_{g(1)}(x_0,e^{-2})).\nonumber
\end{eqnarray}
\end{proof}

Next one can follow Perelman's argument \cite{Pe02} to improve the heat kernel lower bound in case $y$ admits a large distance from $x_0$. See also \cite{Wa17} for a similar argument.

As in \cite{Pe02}, for any $A\ge 1$, we let $\phi$ be a non-decreasing function of one variable, equal 1 on $(-\infty,\frac{1}{2}e^{-5})$, and rapidly increasing to infinity on $(\frac{1}{2}e^{-5},e^{-5})$, in such a way that
\begin{equation}
2(\phi')^2/\phi-\phi''\ge(t_0^{-1}A+2)\phi'-C(m,A)\cdot\phi
\end{equation}
for some constant $C(m,A)<\infty$, where $t_0=t_0(m)$ is the positive constant as in Corollary \ref{Cheeger-Yau}. Fix $x\in M$ and let
\begin{equation}
H(y,t)=H(x,1-2t_0;y,t),\,1-2t_0< t\le 1,
\end{equation}
be a solution to the forward heat equation. One can check easily that the function
$$h(y,t)=H(y,t)\cdot\phi\big(d_t(x_0,y)-t_0^{-1}A(t-(1-t_0))\big)$$
satisfies
\begin{equation}
\frac{\partial}{\partial t}h=\triangle h-2\phi^{-1}\langle\nabla\phi,\nabla h\rangle+\bigg(\phi'\big(\frac{\partial d_t}{\partial t}-\triangle d_t-t_0^{-1}A\big)-\phi''+2(\phi')^2/\phi\bigg)\cdot H.\nonumber
\end{equation}
By Lemma 8.3 (a) in \cite{Pe02} again, we have under the assumption (\ref{Ricci: normalized}), $\frac{\partial d_t}{\partial t}-\triangle d_t\ge-2$. Thus,
\begin{equation}
\frac{\partial}{\partial t}h\ge\triangle h-2\phi^{-1}\langle\nabla\phi,\nabla h\rangle-C(m,A)h.
\end{equation}
By maximum principle, $\min_{y\in M} \big(e^{C(m,A)t}h(y,t)\big)$ increases in $t$. In particular, since $\phi\ge 1$, we have
\begin{equation}\nonumber
\min_{B_{g(t)}(x_0,r(t))} H(\cdot,t)\le \min_{M}h(\cdot,t)\le e^{C(m,A)}\cdot\min_{M}h(\cdot,1)\le e^{C(m,A)}\cdot\min_{B_{g(1)}(x_0,A)}H(\cdot,1),
\end{equation}
where $r(t)=e^{-5}+t_0^{-1}A(t-(1-t_0))$ is a radius at time $t$ such that $h$ is infinite outside $B_{g(t)}(x_0,r(t))$. In particular,
\begin{equation}
\min_{B_{g(1)}(x_0,A)}H(\cdot,1)\ge e^{-C(m,A)}\cdot\min_{B_{g(1-t_0)}(x_0,e^{-5})} H(\cdot,1-t_0),
\end{equation}
in other words,
\begin{equation}
H(x,1-2t_0;y,1)\ge e^{-C(m,A)}\cdot \min_{z\in B_{g(1-t_0)}(x_0,e^{-5})} H(x,1-2t_0;z,1-t_0)
\end{equation}
for any $y\in B_{g(1)}(x_0,A)$. When $x\in B_{g(1)}(x_0,e^{-2})$, the right hand side admits a lower bound by (\ref{e: 324})
\begin{equation}\nonumber
\min_{z\in B_{g(1-t_0)}(x_0,e^{-5})} H(x,1-2t_0;z,1-t_0)\ge\frac{c(m)}{\vol_{g(1)}(B_{g(1)}(x_0,e^{-2}))}.
\end{equation}
So we conclude the following lemma.

\begin{lemma}
Under the assumption {\rm(\ref{Ricci: normalized})}, we have
\begin{equation}\label{e: 325}
H(x,1-2t_0;y,1)\ge \frac{c(m)\cdot e^{-C(m,A)}}{\vol_{g(1)}(B_{g(1)}(x_0,e^{-2}))}
\end{equation}
for any $x\in B_{g(1)}(x_0,e^{-2})$ and $y\in B_{g(1)}(x_0,A)$, where $t_0=t_0(m)\le\frac{1}{200}$ is the positive constant in Corollary \ref{Cheeger-Yau} and $c(m)$ is a positive constant depending only $m$.
\end{lemma}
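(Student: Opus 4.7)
The plan is to propagate the already-established lower bound in (\ref{e: 324}) from the small ball $B_{g(1-t_0)}(x_0,e^{-5})$ at time $1-t_0$ out to the larger ball $B_{g(1)}(x_0,A)$ at time $1$, by means of Perelman's distance-cutoff barrier argument from Section 8 of \cite{Pe02}. Concretely, fix $x\in B_{g(1)}(x_0,e^{-2})$ and regard $H(y,t)=H(x,1-2t_0;y,t)$ as a solution of the forward heat equation $\partial_t H=\triangle_{g(t)} H$ on the time interval $(1-2t_0,1]$; our task is to lower-bound $H(y,1)$ uniformly for $y\in B_{g(1)}(x_0,A)$.

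First I would choose a smooth nondecreasing one-variable cutoff $\phi$ equal to $1$ on $(-\infty,\tfrac{1}{2}e^{-5})$ and blowing up to $+\infty$ on $(\tfrac{1}{2}e^{-5},e^{-5})$, tuned so that $2(\phi')^2/\phi-\phi''\ge (t_0^{-1}A+2)\phi'-C(m,A)\phi$, and then form the barrier
\begin{equation*}
h(y,t)\,=\,H(y,t)\cdot\phi\bigl(d_t(x_0,y)-t_0^{-1}A(t-(1-t_0))\bigr).
\end{equation*}
A direct computation, together with the distance distortion estimate $\partial_t d_t-\triangle d_t\ge -2$ (valid in the barrier sense under the Ricci bound (\ref{Ricci: normalized}), by Lemma 8.3(a) of \cite{Pe02}), shows that
\begin{equation*}
\frac{\partial}{\partial t}h\,\ge\,\triangle h-2\phi^{-1}\langle\nabla\phi,\nabla h\rangle-C(m,A)\,h.
\end{equation*}
This is the parabolic differential inequality whose structure lets the maximum principle handle the gradient term as a drift; so $\min_{y\in M}\bigl(e^{C(m,A)t}h(y,t)\bigr)$ is nondecreasing in $t$ on $(1-2t_0,1]$.

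Evaluating this monotonicity between $t=1-t_0$ and $t=1$ and using that $\phi\ge 1$ gives
\begin{equation*}
\min_{B_{g(1)}(x_0,A)}H(\cdot,1)\,\ge\,e^{-C(m,A)}\!\min_{B_{g(1-t_0)}(x_0,e^{-5})}H(\cdot,1-t_0),
\end{equation*}
because outside the ball $B_{g(t)}(x_0,r(t))$ with $r(t)=e^{-5}+t_0^{-1}A(t-(1-t_0))$ the factor $\phi$ is infinite and may be ignored, while at $t=1$ the ball $B_{g(1)}(x_0,A)$ is contained in $B_{g(1)}(x_0,r(1))$. Plugging the previously proven pointwise lower bound (\ref{e: 324}) into the right-hand side then yields the claimed estimate (\ref{e: 325}).

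The main obstacle is the barrier computation itself: verifying that the particular combination $h=H\cdot\phi(d_t-t_0^{-1}A(t-(1-t_0)))$ satisfies the stated differential inequality, using Perelman's distance distortion $\partial_t d_t-\triangle d_t\ge -2$ in the barrier sense and absorbing the cutoff derivatives into the drift term via the prescribed ODE inequality on $\phi$. Once this is in place, the rest is the standard maximum principle plus (\ref{e: 324}), and tracking the dependence of the constants on $A$ through the $C(m,A)$ exponential factor.
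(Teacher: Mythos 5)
Your proposal is correct and follows essentially the same route as the paper: the same Perelman-style cutoff $\phi$ with the ODE inequality, the same barrier $h=H\cdot\phi(d_t-t_0^{-1}A(t-(1-t_0)))$, the same use of $\partial_t d_t-\triangle d_t\ge-2$ to obtain the parabolic inequality, the same maximum-principle monotonicity of $\min_M(e^{C(m,A)t}h)$, and the same conclusion by feeding in (\ref{e: 324}). One small point of precision: the reason the right side at $t=1$ reduces to $\min_{B_{g(1)}(x_0,A)}H(\cdot,1)$ is not merely that $B_{g(1)}(x_0,A)\subset B_{g(1)}(x_0,r(1))$, but that for $y\in B_{g(1)}(x_0,A)$ the argument $d_1(x_0,y)-A\le 0<\tfrac12 e^{-5}$, so $\phi\equiv 1$ there and $h(\cdot,1)=H(\cdot,1)$ on that ball.
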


\begin{remark}\label{constant}
According to the calculation in \cite{Wa17} the constant $C(m,A)$ can be chosen as $C(m)\cdot A^2$ for some constant $C(m)$.
\end{remark}

\begin{corollary}\label{conjugate equation estimate}
Let $v(x,t')$ be a nonnegative solution to the conjugate heat equation
\begin{equation}
-\frac{\partial}{\partial t'}v=\triangle v-Rv
\end{equation}
with initial $v\in C_0\big(B_{g(1)}(x_0,A)\big)$ satisfying
\begin{equation}
\int_Mv(x)dv_{g(1)}(x)=1.
\end{equation}
Then, under the assumption {\rm(\ref{Ricci: normalized})}, we have
\begin{equation}\label{e: 327}
v(x,1-2t_0)\ge \frac{c(m,A)}{\vol_{g(1)}(B_{g(1)}(x_0,e^{-2}))}
\end{equation}
for any $x\in B_{g(1-2t_0)}(x_0,e^{-3})$. In particular,
\begin{equation}\label{e: 328}
\int_{B_{g(1-2t_0)}(x_0,e^{-3})}v(x,1-2t_0)dv_{g(1-2t_0)}(x)\ge c(m,A)
\end{equation}
for some positive constant $c(m,A)$ depending on $m$ and $A$.
\end{corollary}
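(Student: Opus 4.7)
The plan is to represent $v(\cdot,1-2t_0)$ as an integral against the heat kernel and then plug in the pointwise lower bound (\ref{e: 325}) already established for $H(\cdot,1-2t_0;\cdot,1)$.

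First, I would use the duality between the heat equation and the conjugate heat equation under Ricci flow: since $v$ solves the conjugate heat equation with prescribed data $v(\cdot,1)$ supported in $B_{g(1)}(x_0,A)$, and $H(x,t';y,t)$ satisfies the conjugate equation in $(x,t')$ and the forward equation in $(y,t)$, one has the representation
\begin{equation}\nonumber
v(x,1-2t_0)\,=\,\int_M H(x,1-2t_0;y,1)\,v(y,1)\,dv_{g(1)}(y).
\end{equation}

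Next, for any $x\in B_{g(1-2t_0)}(x_0,e^{-3})$ I would invoke the metric equivalence (\ref{e: 301}) coming from $|\Ric|\le 1$ on $B_0\times[0,1]$. Since $d_1\le e^{1/2}d_{1-2t_0}$, the inclusion $B_{g(1-2t_0)}(x_0,e^{-3})\subset B_{g(1)}(x_0,e^{-2})$ holds. Consequently, the heat kernel estimate (\ref{e: 325}) applies to every pair $(x,y)$ with $x\in B_{g(1-2t_0)}(x_0,e^{-3})$ and $y\in B_{g(1)}(x_0,A)\supset\operatorname{supp}v(\cdot,1)$, yielding
\begin{equation}\nonumber
H(x,1-2t_0;y,1)\,\ge\,\frac{c(m)\,e^{-C(m,A)}}{\vol_{g(1)}(B_{g(1)}(x_0,e^{-2}))}.
\end{equation}
Substituting into the integral representation and using $\int_M v(y,1)\,dv_{g(1)}(y)=1$ together with the support condition on $v(\cdot,1)$ immediately gives (\ref{e: 327}) with $c(m,A)=c(m)e^{-C(m,A)}$.

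Finally, to deduce (\ref{e: 328}) I would integrate (\ref{e: 327}) over $B_{g(1-2t_0)}(x_0,e^{-3})$ and control the ratio of volumes $\vol_{g(1-2t_0)}(B_{g(1-2t_0)}(x_0,e^{-3}))/\vol_{g(1)}(B_{g(1)}(x_0,e^{-2}))$ from below by a dimensional constant. This last ratio is handled in two routine steps: the metric equivalence (\ref{e: 301}) comparing volume forms and the inclusion $B_{g(1)}(x_0,e^{-7/2})\subset B_{g(1-2t_0)}(x_0,e^{-3})$; and the Bishop–Gromov volume comparison on $(M,g(1))$, which is available since $\Ric(g(1))\ge -1$ on the relevant ball and which bounds $\vol_{g(1)}(B_{g(1)}(x_0,e^{-2}))/\vol_{g(1)}(B_{g(1)}(x_0,e^{-7/2}))$ from above by a dimensional constant. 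There is no real obstacle here; the entire content of the corollary is packaging the heat kernel lower bound (\ref{e: 325}) against an $L^1$-normalized conjugate heat solution, with the volume bookkeeping carried out via Bishop–Gromov and the Ricci flow metric equivalence.
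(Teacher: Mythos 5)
Your proof is correct and follows essentially the same route as the paper: the Duhamel representation $v(x,1-2t_0)=\int_M H(x,1-2t_0;y,1)v(y,1)\,dv_{g(1)}(y)$, the inclusion $B_{g(1-2t_0)}(x_0,e^{-3})\subset B_{g(1)}(x_0,e^{-2})$ from the metric equivalence (\ref{e: 301}), the pointwise lower bound (\ref{e: 325}) combined with the normalization $\int v\,dv_{g(1)}=1$, and then the volume bookkeeping via metric equivalence and Bishop--Gromov for (\ref{e: 328}). The only immaterial difference is your use of $B_{g(1)}(x_0,e^{-7/2})\subset B_{g(1-2t_0)}(x_0,e^{-3})$ where the paper uses $e^{-4}$ in place of $e^{-7/2}$; both are valid.
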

\begin{proof}
The solution $v$ has the formal representation
\begin{equation}
v(x,t')=\int_MH(x,t';y,1)\cdot v(y)~dv_{g(1)}(y)
\end{equation}
for any $t'<1$. Then, for any $x\in B_{g(1)}(x_0,e^{-2})$, (\ref{e: 325}) yields,
\begin{equation}\nonumber
v(x,1-2t_0)=\int_{B_{g(1)}(x_0,A)}H(x,1-2t_0;y,1)\cdot v(y)~dv_{g(1)}(y)\ge \frac{c(m,A)}{\vol_{g(1)}(B_{g(1)}(x_0,e^{-2}))}.
\end{equation}
Noticing that $B_{g(1-2t_0)}(x_0,e^{-3})\subset B_{g(1)}(x_0,e^{-2})$ we get the estimate (\ref{e: 327}). The last integral estimate (\ref{e: 328}) is a consequence of metric equivalence of $g(1-2t_0)$ and $g(1)$ on the metric ball $B_{g(1-2t_0)}(x_0,e^{-3})$ and the relative volume comparison at time $1$ on the metric ball $B_{g(1)}(x_0,e^{-2})\subset B_0$,
\begin{eqnarray}
\vol_{g(1-2t_0)}(B_{g(1-2t_0)}(x_0,e^{-3}))&\ge& e^{-m}\vol_{g(1)}(B_{g(1-2t_0)}(x_0,e^{-3}))\nonumber\\
&\ge& e^{-m}\vol_{g(1)}(B_{g(1)}(x_0,e^{-4}))\nonumber\\
&\ge& c(m)\cdot\vol_{g(1)}(B_{g(1)}(x_0,e^{-2})).\nonumber
\end{eqnarray}
\end{proof}

\section{Proof of Theorem \ref{relative noncollapsing}}

In this section we prove the main Theorem \ref{relative noncollapsing}.

Let $r_0>0$ be any constant. Let $g(t),0\le t\le r_0^2,$ be a solution to the Ricci flow on a compact $m$-manifold $M$. We assume that
\begin{equation}\label{Ricci: unnormalized}
|\Ric|\le r_0^{-2},\,\mbox{ on } B_{g(0)}(x_0,r_0)\times[0,r_0^2].
\end{equation}

Let $A\ge 1$ be a constant and $\Omega=B_{g(r_0^2)}(x_0,Ar_0)$ be a metric ball such that $\partial\Omega\neq\emptyset$. Define a family of parameters
$$\tau(t)=\tau_0+r_0^2-t$$
where $\tau_0$ is any positive constant less than $r_0^2$. Let $u(x)$ be a nonnegative minimizer of $\mu_\Omega(g(r_0^2),\tau_0)$ which extends continuously over $M$ such that $u$ vanishes outside of $\Omega$. Let $v(x,t)$, $0\le t\le r_0^2$, be the solution to the conjugate heat equation
\begin{equation}
-\frac{\partial}{\partial t}v=\triangle v-Rv,
\end{equation}
with initial value $v(r_0^2)=u^2$. By choosing $r=e^{-2}r_0$ in (\ref{relative volume ratio: 2}) we have
\begin{equation}\label{e 4.3}
\mu_{B}\big(g(t),\tau(t)\big)\le\mu_\Omega(g(r_0^2),\tau_0)
+200\cdot\bigg(\int_{B_{g(t)}(x_0,\frac{1}{2}e^{-1}r_0)}v(t)\bigg)^{-1}
\end{equation}
for any $(1-e^{-4})r_0^2\le t\le r_0^2$ and $0<\tau_0\le r^2= e^{-4}r_0^2$, where $B=B_{g(t)}(x_0,e^{-2}r_0)$.

Then, after a scaling of space-time and putting $t=(1-2t_0)r_0^2$ in (\ref{e: 328}) we obtain,
\begin{equation}
\int_{B_{g((1-2t_0)r_0^2)}(x_0,\frac{1}{2}e^{-1}r_0^2)}v(x,(1-2t_0)r_0^2)\ge c(m,A)
\end{equation}
for a positive constant $c(m,A)$. Here, $t_0=t_0(m)$ is the positive constant in Corollary  \ref{Cheeger-Yau}, which is so small that the time $(1-2t_0)r_0^2\ge(1-e^{-4})r_0^2$. Combining with (\ref{e 4.3}) we get
\begin{equation}\label{e: 510.5}
\mu_\Omega(g(r_0^2),\tau_0)\ge\mu_{B}\big(g\big((1-2t_0)r_0^2\big),\tau_0+2t_0 r_0^2\big)-C(m,A)
\end{equation}
for any $0<\tau_0\le r^2=e^{-4}r_0^2$, where $B=B_{g((1-2t_0)r_0^2)}(x_0,r)\subset B_0=B_{g(0)}(x_0,r_0)$. Next we shall apply Corollary \ref{entropy-volume: 1} to derive a lower bound of the entropy term on the right hand side of (\ref{e: 510.5}). To this purpose we choose any $0<\tau_0\le\frac{1}{10000}r_0^2$ such that
$$\tau_0+2t_0r_0^2\le\frac{1}{10000}r_0^2+\frac{1}{100}r_0^2\le r^2.$$
Noticing that $|\Ric|\le r_0^{-2}\le r^{-2}$ in $B$ we have by Corollary \ref{entropy-volume: 1},
\begin{equation}\label{510.75}
\mu_{B}\big(g\big((1-2t_0)r_0^2\big),\tau_0+2t_0r_0^2\big)\ge\log\frac{\vol_{g((1-2t_0)r_0^2)}(B_{g((1-2t_0)r_0^2)}(x_0,e^{-2}r_0))}{r_0^m}-C(m).
\end{equation}
By using the metric equivalence of $g((1-2t_0)r_0^2)$ and $g(0)$ on $B_{g((1-2t_0)r_0^2)}(x_0,e^{-2}r_0)$ the right hand side gives rise to the required the lower volume ratio bound. In fact, we have
\begin{eqnarray}
\vol_{g((1-2t_0)r_0^2)}(B_{g((1-2t_0)r_0^2)}(x_0,e^{-2}r_0))&\ge& e^{-m}\cdot\vol_{g(r_0^2)}(B_{g((1-2t_0)r_0^2)}(x_0,e^{-2}r_0))\nonumber\\
&\ge& e^{-m}\cdot\vol(B_{g(0)}(x_0,e^{-3}r_0))\label{e: 5512}\\
&\ge& C(m)^{-1}\cdot\vol(B_{g(0)}(x_0,r_0))\nonumber
\end{eqnarray}
where we used the Bishop-Gromov volume comparison in the last inequality. Summing up (\ref{e: 510.5})-(\ref{e: 5512}) yields the lower bound of the entropy
\begin{equation}\label{e: 511}
\inf_{0<\tau_0\le\frac{1}{10000}r_0^2}\mu_\Omega(g(r_0^2),\tau_0)\ge\log\frac{\vol_{g(0)}(B_{g(0)}(x_0,r_0))}{r_0^m}-C(m,A).
\end{equation}

Then we can use  Corollary \ref{entropy-volume: 2} to prove the upper bound $\mu_\Omega(g(r_0^2),\tau_0)$ in terms of the volume ratio of metric balls in $\Omega$. Actually, for any metric ball $B'=B_{g(r_0^2)}(x,r)\subset B_{g(r_0^2)}(x_0,Ar_0)$ we have the monotonicity of local entropy
\begin{equation}\label{e: 516}
\mu_\Omega(g(r_0^2),\tau_0)\le \mu_{B'}(g(r_0^2),\tau_0).
\end{equation}
On the other hand, by Corollary \ref{entropy-volume: 2}, if the scalar curvature at time $t=r_0^2$ satisfies
\begin{equation}\label{e: 517}
R(x,r_0^2)\le r^{-2},\,\forall x\in B',
\end{equation}
then
\begin{equation}\label{e: 518}
\min_{0<\tau_0\le r^2}\mu_{B'}(g(r_0^2),\tau_0)\le \log\frac{\vol_{g(r_0^2)}(B')}{r^m}+C(m).
\end{equation}
Hence, if the radius $r\le\frac{1}{100}r_0$, then by combining with (\ref{e: 511}), (\ref{e: 516}) and (\ref{e: 518}) we get
\begin{equation}
\log\frac{\vol_{g(r_0^2)}(B')}{r^m}\ge\log\frac{\vol_{g(0)}(B_{g(0)}(x_0,r_0))}{r_0^m}-C(m,A).
\end{equation}
This gives the desired estimate (\ref{relative volume}) in the main Theorem \ref{relative noncollapsing} when $r\le\frac{1}{100}r_0$. In the case when $ \frac{1}{100}r_0\le r\le r_0$ one can simply use $\vol_{g(r_0^2)}(B(x,r))\ge\vol_{g(r_0^2)}(B(x,\frac{1}{100}r_0))$ to get the estimate (\ref{relative volume}). The proof of Theorem \ref{relative noncollapsing} is complete.

\begin{remark}\label{constant 2}
By Remark \ref{constant} one can choose the constant $C(m,A)$ as
$$C(m,A)=C(m)\cdot e^{C(m)\cdot A^2}.$$
So the $\kappa$ in Theorem \ref{relative noncollapsing} can be chosen as
$$\kappa(m,A)=\exp\{-C(m)\cdot e^{C(m)\cdot A^2}\}.$$
\end{remark}

\section{K\"ahler-Ricci flow on smooth minimal models of Kodaira dimension one}

In this section, $X$ will be a K\"ahler manifold of complex dimension $n\ge 2$ with semiample canonical line bundle $K_X$ and Kodaira dimension one. For sufficiently large integer $\ell$, a basis of $H^0(X,K_X^\ell)$ defines a morphism $\pi:X\rightarrow X_{can}\subset\mathbb{C}P^{N_\ell}$ onto the canonical model $X_{can}$, a curve. Let $S=\{p_1,\cdots,p_q\}$ be the set of critical values of $\pi$. In \cite{SoTi07, SoTi12}, Song-Tian constructed the generalized K\"ahler-Einstein current $\omega_{\GKE}$ on $X_{can}$ which defines a smooth metric form on the regular set $X_{can}\backslash S$ such that
\begin{equation}
\Ric(\omega_{\GKE})=-\omega_{\GKE}+\omega_{\WP},\,\mbox{ on }X_{can}\backslash S,
\end{equation}
where $\omega_{\WP}$ is the Weil-Petersson form. It is pointed out by Y.S. Zhang in \cite{Zh17} that the metric completion of $(X_{can}\backslash S,\omega_{\GKE})$ is homeomorphic to $X_{can}$, so $\omega_{\GKE}$ defines a metric on $X_{can}$; see also \cite{ZhZh16} for the case when $X$ is a K\"ahler surface. We shall denote this metric space as $(X_{can},d_{\GKE})$. In this section we shall use the relative volume comparison of Ricci flow to study the geometric convergence of a K\"ahler-Ricci flow on $X$ to the generalized K\"ahler-Einstein space $(X_{can},d_{\GKE})$.

Let $\omega(t)$ be a K\"ahler-Ricci flow on $X$,
\begin{equation}\label{e: 610}
\frac{\partial}{\partial t}\omega=-\Ric-\omega.
\end{equation}
Due to the existence theorem of Tian-Zhang \cite{TiZh06}, the K\"ahler-Ricci flow has a global solution $\omega(t)$ for all $t\ge 0$. In the following $C_i$'s will be constants depending only on the initial metric $\omega(0)$.

Denote $\chi=\ell^{-1}\pi^*\omega_{FS}$ to be the pull-back of the Fubini-Study metric on $\mathbb{C}P^N$. Then we can write
\begin{equation}\label{e: 611}
\omega(t)=e^{-t}\omega(0)+(1-e^{-t})\chi+\sqrt{-1}\partial\bar{\partial}\varphi_t
\end{equation}
for a family of smooth real-valued functions $\varphi_t$ which satisfies a complex Monge-Amp\`ere equation
\begin{equation}
\frac{\partial}{\partial t}\varphi_t=\log\frac{\big(e^{-t}\omega(0)+(1-e^{-t})\chi+\sqrt{-1}\partial\bar{\partial}\varphi_t\big)^n}{e^{-(n-1)t}\Phi}-\varphi_t,
\end{equation}
with initial value $\varphi_0=0$. Here, $\Phi$ is a smooth volume form on $X$ such that $\sqrt{-1}\partial\bar{\partial}\log\Phi=\chi$. Then we recall some important estimates that shall be used later. In \cite{SoTi07, SoTi12} Song-Tian proved that $\omega(t)$ converges in the sense of currents to $\pi^*\omega_{\GKE}$; in \cite{SoTi16} they also proved that both $\varphi_t$ and $\frac{\partial\varphi_t}{\partial t}$ are uniformly bounded
\begin{equation}\label{e: 612}
|\varphi_t|+\big|\frac{\partial\varphi_t}{\partial t}\big|\le C_1,\,\forall t\in [0,\infty);
\end{equation}
it follows in particular that
\begin{equation}\label{e: 613}
 e^{-C_1-(n-1)t}\Phi\le\omega(t)^n\le e^{C_1-(n-1)t}\Phi;
\end{equation}
Song-Tian also proved in \cite{SoTi16} that the scalar curvature is uniformly bounded
\begin{equation}\label{e: 614}
|R(t)|\le C_2,\,\forall t\in[0,\infty).
\end{equation}
Tosatti-Weinkove-Yang proved in \cite{ToWeYa17} that $\omega(t)$ converges in the $C^0_{loc}$-topology on $X_{\reg}$, namely, for any compact subset $K\subset X_{can}\backslash S$
\begin{equation}\label{e:614.5}
\|\omega(t)-\pi^*\omega_{\GKE}\|_{C^0(K,\omega(0))}\rightarrow 0,\,\mbox{ as }t\rightarrow\infty.
\end{equation}

Now we give a proof of Theorem \ref{KRF: minimal model}.

\begin{proof}[Proof of Theorem \ref{KRF: minimal model}]
Assume as above. Let $S=\{p_1,\cdots,p_q\}$ be the set of critical values of $\pi$. For any positive number $0<\epsilon<1$ we define $U_\epsilon=\cup_{i=1}^NB_{\chi}(p_i,\epsilon)$ and let $\widetilde{U}_\epsilon=\pi^{-1}(U_\epsilon)$. Then, 
(\ref{e: 613}) implies
\begin{equation}\label{e: 616}
\limsup_{t\rightarrow\infty}\frac{\vol_{\omega(t)}(\widetilde{U}_\epsilon)}{\vol_{\omega(t)}(X)}=\delta(\epsilon)
\end{equation}
for some positive function $\delta$ satisfying $\lim_{\epsilon\rightarrow 0}\delta(\epsilon)=0$. Due to the estimate of $\omega_{\GKE}$ around singular points, cf. Theorem 1.1 of \cite{Zh17}, we have the trivial facts
\begin{equation}\label{e: 631}
d_{GH}\big((X_{can},d_{\GKE}),(X_{can}\backslash U_\epsilon,d_{\GKE})\big)=\delta'
\end{equation}
and, for any critical value $p_i$,
\begin{equation}\label{e: 631.5}
\diam(\partial B_\chi(p_i,\epsilon),d_{\GKE})\le\delta',\,\forall i,
\end{equation}
for some $\delta'=\delta'(\epsilon)$ which satisfies $\delta'\rightarrow 0$ as $\epsilon\rightarrow 0$.

In the following we use $d_t$ to denote the distance function of $\omega(t)$ on $X$. Let $\delta$ and $\delta'$ be any positive numbers less than 1. We may assume that $d_{\GKE}(p_i,p_j)\ge 4\delta$ for any $i\neq j$. Choose positive numbers $\epsilon\ll\delta$ and $T_1<\infty$ such that
\begin{equation}\label{e: 630}
\frac{\vol_{\omega(t)}(\widetilde{U}_\epsilon)}{\vol_{\omega(t)}(X)}\le\delta,\,\mbox{ for }\forall t\ge T_1.
\end{equation}
Fix one such $\epsilon$ from now on. As in the proof of Lemma 3.10 in \cite{ZhZh16}, applying the $C^0$ convergence on the compact set $\overline{X_{can}\backslash U_\epsilon}$, we can prove
\begin{equation}\label{e: 632}
d_{GH}\big((X_{can}\backslash U_\epsilon,d_{\GKE}),(X\backslash\widetilde{U}_\epsilon,d_t)\big)\le \delta,
\end{equation}
and, for any point $p_i$,
\begin{equation}\label{e: 632.5}
\diam\big(\partial (\pi^{-1}{B_\chi(p_i,\epsilon)}),d_t\big)\le2\delta',
\end{equation}
at any sufficiently large time $t$, say $t\ge T_2(\omega(0),\delta)$. In particular, if we denote by $D$ the diameter of $(X_{can},d_{\GKE})$, then by (\ref{e: 631}) and (\ref{e: 632}) we have
\begin{equation}
\diam(X\backslash\widetilde{U}_\epsilon,d_t)\le D +2,\,\mbox{ for }\forall t\ge T_2.
\end{equation}
It remains to estimate the Gromov-Hausdorff distance between $(X,d_t)$ and its subset $(X\backslash\widetilde{U}_\epsilon,d_t)$. We will use the relative volume comparison of Ricci flow to do this.

By assumption we have the uniform Ricci curvature bound on a domain $\pi^{-1}(U)$ where $U\subset X_{can}\backslash S$. Let $x_0\in U$ be a regular point and $0<r_0\le 1$ be a radius such that $d_{\GKE}(x_0,\partial U)<\frac{1}{2}r_0$. Let $\tilde{x}_0\in\pi^{-1}(x_0)$ be any inverse point. By assumption (\ref{Ricci: minimal model}), together with the $C^0$ convergence of $\omega(t)$ on $\overline{U}$, we may assume $r_0$ is so small that
\begin{equation}
|\Ric(x,t)|\le r_0^{-2},\,\mbox{ for any }x\in B_{\omega(t)}(\tilde{x}_0,r_0)
\end{equation}
and, by the scalar curvature estimate (\ref{e: 614}),
\begin{equation}
|R(x,t)|\le r_0^{-2},\,\mbox{ for any }x\in X,
\end{equation}
for any time $t\ge T_3$, a constant depending on $\omega(0)$. Moreover, we also have that $\pi^{-1}\big(B_{\omega_{\GKE}}(x_0,\frac{1}{2}r_0)\big)\subset B_{\omega(t)}(\tilde{x}_0,r_0)$ whenever $t\ge T_3$. According to Theorem 1.1 in \cite{Zh17}, the metric $\omega_{\GKE}$ defines the same topology as $\chi$ on the regular set. It implies that $B_\chi(x_0,r_0')\subset B_{\omega_{\GKE}}(x_0,\frac{1}{2}r_0)$ for some $r_0'>0$. So, by (\ref{e: 613}), the volume of $B_{\omega(t)}(\tilde{x}_0,r_0)$ can be estimated as follows
\begin{eqnarray}
\vol_{\omega(t)}(B_{\omega(t)}(\tilde{x}_0,r_0))
\ge\vol_{\omega(t)}(\pi^{-1}(B_\chi(x_0,r_0')))\ge e^{-C_1-(n-1)t}\int_{\pi^{-1}(B_\chi(x_0,r_0'))}\Phi.\nonumber
\end{eqnarray}
It follows that
\begin{equation}\label{e: 636}
\vol_{\omega(t)}\big(B_{\omega(t)}(\tilde{x}_0,r_0)\big)\ge C_3^{-1}\vol_{\omega(t)}(X)
\end{equation}
for some $C_3<\infty$ whenever $t\ge T_3$.

We next rescale the K\"ahler-Ricci flow $\omega(t)$ to the unnormalized one and then apply our relative volume comparison theorem. Fix any time $T\ge \max(T_1,T_2,T_3)$. Let $\tilde{t}(t)=\frac{1}{2}(e^{t-T}-1)$ for $t\ge T$ and $\tilde{\omega}(\tilde{t})=e^{t-T}\omega(t)=(1+2\tilde{t})\omega(t)$. Then $\tilde{\omega}$ is a solution to the Ricci flow
\begin{equation}
\frac{\partial}{\partial\tilde{t}}\tilde{\omega}=-2\widetilde{\Ric}
\end{equation}
where $\widetilde{\Ric}$ is the Ricci curvature form of $\tilde{\omega}$, with initial $\widetilde{\omega}(0)=\omega(T)$. Then
\begin{equation}
|\widetilde{\Ric}(x,\tilde{t})|\le r_0^{-2},\,\mbox{ for any }x\in B_{\tilde{\omega}(\tilde{t})}(\tilde{x}_0,r_0)
\end{equation}
and, the corresponding scalar curvature,
\begin{equation}
|\widetilde{R}(x,\tilde{t})|\le r_0^{-2},\,\mbox{ for any }x\in X,
\end{equation}
when $0\le \tilde{t}\le r_0^2$. Let $\tilde{d}_{\tilde{t}}$ be the distance function associated to $\tilde{\omega}(\tilde{t})$. Then,
$$\diam(X\backslash \widetilde{U}_\epsilon,\tilde{d}_{r_0^2})\le (1+2r_0^2)(D +2)\le 3(D+2).$$
Thus, if there exists a metric ball of radius $\rho\le 1$, say $B_{\tilde{\omega}(r_0^2)}(\tilde{x}_0,\rho)$, included in $\widetilde{U}_\epsilon$, such that $\partial B_{\tilde{\omega}(r_0^2)}(\tilde{x}_0,\rho)\cap\partial\widetilde{U}_\epsilon\neq\emptyset$, then the relative volume comparison theorem \ref{relative noncollapsing} implies
\begin{equation}\label{e: 640}
\frac{\vol_{\tilde{\omega}(r_0^2)}(B_{\tilde{\omega}(r_0^2)}(\tilde{x}_0,\rho))}
{\vol_{\tilde{\omega}(0)}(B_{\tilde{\omega}(0)}(\tilde{x}_0,r_0))}\ge \kappa(n,D)\cdot\frac{\rho^4}{r_0^4}
\end{equation}
where $\kappa(n,D)$ is a positive constant depending only on $n$ and $D$. On the other hand, we also have the upper bound,
$$\frac{\vol_{\tilde{\omega}(r_0^2)}(B_{\tilde{\omega}(r_0^2)}(\tilde{x}_0,\rho))}
{\vol_{\tilde{\omega}(0)}(B_{\tilde{\omega}(0)}(\tilde{x}_0,r_0))}
\le\frac{\vol_{\tilde{\omega}(r_0^2)}(\widetilde{U}_\epsilon)}{\vol_{\tilde{\omega}(r_0^2)}(X)}
\cdot\frac{\vol_{\tilde{\omega}(r_0^2)}(X)}{\vol_{\tilde{\omega}(0)}(X)}
\cdot\frac{\vol_{\tilde{\omega}(0)}(X)}
{\vol_{\tilde{\omega}(0)}(B_{\tilde{\omega}(0)}(\tilde{x}_0,r_0))}$$
where
$$\frac{\vol_{\tilde{\omega}(r_0^2)}(X)}{\vol_{\tilde{\omega}(0)}(X)}
=(1+2r_0^2)^m\cdot\frac{\vol_{\omega(T+t_0)}(X)}{\vol_{\omega(T)}(X)}\le C_{4}$$
where $t_0=\log(1+2r_0^2)$. Thus, together with (\ref{e: 630}) and (\ref{e: 636}), we have
$$\frac{\vol_{\tilde{\omega}(r_0^2)}(B_{\tilde{\omega}(r_0^2)}(\tilde{x}_0,\rho))}
{\vol_{\tilde{\omega}(0)}(B_{\tilde{\omega}(0)}(\tilde{x}_0,r_0))}
\le C_3\cdot C_4\cdot\delta.$$
Together with (\ref{e: 640}) it follows that
$$ \kappa(n,D)\cdot\frac{\rho^4}{r_0^4}\le C_3\cdot C_4\cdot\delta$$
which implies
\begin{equation}\nonumber
\rho\le C_5\cdot\delta^{1/4}
\end{equation}
for a constant $C_5$ independent of $t$. In particular, after rescaling, it follows that at time $t=T+\log(1+2r_0^2)$ where $T\ge \max(T_1,T_2,T_3)$,
$$d_{GH}\big((X\backslash\widetilde{U}_\epsilon,d_t),(X,d_t)\big)\le (1+2r_0^2)^{-1}\cdot C_5\cdot\delta^{1/4}+\sum_i\diam\big(\partial (\pi^{-1}{B_\chi(p_i,\epsilon)}),d_t\big).$$
By (\ref{e: 632.5}) we have
\begin{equation}
d_{GH}\big((X\backslash\widetilde{U}_\epsilon,d_t),(X,d_t)\big)\le C_5\cdot\delta^{1/4}+2q\delta'.
\end{equation}
Combining with (\ref{e: 631}) and (\ref{e: 632}) we finally get
\begin{equation}
d_{GH}\big((\Sigma,d_{\GKE}),(X,d_t)\big)\le C_6\cdot(\delta^{1/4}+\delta')
\end{equation}
whenever $t\ge\max\big(T_1,T_2,T_3)+\log(1+2r_0^2)$, where $C_6$ is a constant depending on the initial metric $\omega(0)$. Since $\delta$ and $\delta'$ are arbitrary, the Gromov-Hausdorff convergence is a consequence of this estimate.
\end{proof}

\section{Further discussions on Generalized K\"ahler-Einstein metric and K\"ahler-Ricci flow}

Let $X$ be an $n$-dimensional K\"ahler manifold with semi-ample canonical line bundle $K_X$. For any sufficiently large $\ell$, a basis of $H^0(X,K_X^\ell)$ defines a holomorphic Calabi-Yau fibration $\pi:X\rightarrow X_{can}\subset\mathbb{C}P^N$ onto its canonical model $X_{can}$. Let $S$ be the set of singular values of $\pi$, which is a subvariety of $X_{can}$. In \cite{SoTi07, SoTi12}, Song and the first named author constructed the unique generalized K\"ahler-Einstein current $\omega_{\GKE}$ on $X_{can}$. The current $\omega_{\GKE}$ is smooth on the regular set $X_{can}\backslash S$ and satisfies
\begin{equation}
\Ric(\omega_{\GKE})=-\omega_{\GKE}+\omega_{\WP},\,\mbox{ on }X_{can}\backslash S.
\end{equation}
As part of the AMMP, the Conjecture 6.3 in \cite{SoTi09} says that the K\"ahler-Ricci flow on $X$ ``\textit{converges to the unique generalized K\"ahler-Einstein metric $\omega_{\GKE}$ on $X_{can}$ in the sense of Gromov-Hausdorff}''. In the following, we give more discussions to this conjecture.

Let $d_{\GKE}$ be the induced length metric of $\omega_{\GKE}$ on $X_{can}\backslash S$.

\begin{definition}[Generalized K\"ahler-Einstein metric space]
Let
\begin{equation}
X_{\GKE}\,=\,\overline{(X_{can}\backslash S,d_{\GKE})}
\end{equation}
denote the metric completion of $(X_{can}\backslash S,d_{\GKE})$. We call $X_{\GKE}$ the generalized K\"aler-Einstein metric space associated to the Calabi-Yau fibration $\pi:X\rightarrow Y$. We will also use $d_{\GKE}$ to denote the extended metric on $X_{\GKE}$.
\end{definition}

According to the AMMP, the generalized K\"ahler-Einstein metric space should be identified with the canonical model of the manifold.

\begin{conjecture}\label{conj: id}
$X_{\GKE}$ is homeomorphic to $X_{can}$, in particular, $(X_{\GKE},d_{\GKE})$ is a compact metric space.
\end{conjecture}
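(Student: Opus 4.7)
The plan is to prove Conjecture \ref{conj: id} by constructing a natural continuous surjection $\Phi: X_{can}\to X_{\GKE}$ which extends the canonical inclusion $X_{can}\backslash S\hookrightarrow X_{\GKE}$, and then showing that $\Phi$ is injective and hence a homeomorphism (both spaces being compact Hausdorff, with $X_{\GKE}$ compact as a consequence of the Song-Wang diameter bound and total boundedness). The central analytic task, therefore, is to understand how $d_{\GKE}$ behaves as one approaches a point $p\in S$ along sequences $x_n\in X_{can}\backslash S$.

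For the existence of $\Phi$, the key claim to establish is a local upper bound of the form
\begin{equation}
d_{\GKE}(x,y)\,\le\,C_p\cdot d_{FS}(x,y)^\alpha, \quad x,y\in V_p\backslash S,
\end{equation}
on a neighborhood $V_p$ of each $p\in S$, for some H\"older exponent $\alpha>0$ and constant $C_p$ depending on $p$. Such an estimate immediately implies that Cauchy sequences $x_n\to p$ in the original topology are Cauchy in $d_{\GKE}$ and that the limit point in $X_{\GKE}$ is independent of the sequence, yielding a continuous extension $\Phi$. A natural strategy is to upgrade the bounded-potential property of $\omega_{\GKE}$ (coming from Eyssidieux-Guedj-Zeriahi together with Song-Tian's construction) to a pointwise comparison $\omega_{\GKE}\le C\omega_{FS}$ in $V_p\backslash S$, which would give $\alpha=1/2$. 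Alternatively, one can try to integrate the density of $\omega_{\GKE}$ along radial paths from $p$ after resolving the base locus of $\pi$, using the explicit formula $\Ric(\omega_{\GKE})=-\omega_{\GKE}+\omega_{\WP}$ to control the growth of $\omega_{\GKE}$ via an ODE-type argument along such paths.

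For injectivity of $\Phi$, the task is dual: one needs a local lower bound saying that distinct points $p\ne q$ in $X_{can}$ (in particular distinct points of $S$) give rise to distinct limit points in $X_{\GKE}$. Equivalently, for any $\epsilon>0$ there should exist $\delta>0$ such that the $\omega_{\GKE}$-distance between the punctured neighborhoods $B_\chi(p,\delta)\backslash S$ and $B_\chi(q,\delta)\backslash S$ is at least $\epsilon$. One route is to find smooth nonnegative test functions $\rho_{p,q}$ on $X_{can}\backslash S$, with controlled gradient with respect to $\omega_{\GKE}$, that separate the two neighborhoods; a second route is to use the $C^0$-convergence of $\omega(t)$ to $\pi^*\omega_{\GKE}$ on compact subsets of $X_{\reg}$ together with the relative volume comparison (Theorem \ref{relative noncollapsing}) to compare $d_{\GKE}$-distances with $d_{\omega(t)}$-distances, and then exploit the fact that $X_{can}\backslash U_\epsilon$ has uniformly bounded $\omega(t)$-diameter while points outside $\widetilde{U}_\epsilon$ are genuinely separated.

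The main obstacle is the H\"older (or Lipschitz) upper bound near $S$ in the higher-dimensional setting: in the Kodaira dimension one case treated in Section 5 and in Y.S. Zhang's work \cite{Zh17}, $X_{can}$ is a curve and $S$ consists of finitely many points, so each singular value can be isolated and analyzed via Kodaira's classification of singular fibers together with the explicit asymptotic behavior of $\omega_{\WP}$; in contrast, when $1<\dim X_{can}<n$ the singular locus $S$ is a positive-dimensional subvariety with its own stratification, and there is at present no analogue of Zhang's local estimate. A promising but nontrivial alternative would be to run the K\"ahler-Ricci flow and extract a Gromov-Hausdorff limit using Wang's diameter bound together with our relative volume comparison, and to identify this limit with $X_{\GKE}$ using the $C^0_{loc}$ convergence of Tosatti-Weinkove-Yang on $X_{\reg}$; but identifying the resulting limit space with the canonical model is precisely the difficulty noted at the end of the introduction, and would likely require a partial $C^0$ estimate or a new algebro-geometric input.
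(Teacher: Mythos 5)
This statement is a \emph{conjecture} in the paper, not a theorem: Section 6 states it without proof and explicitly remarks that only special cases are known --- the general-type case via Song's partial $C^0$ estimate \cite{So14}, and the Kodaira-dimension-one case via Y.S.\ Zhang's local estimate \cite{Zh17} (and \cite{ZhZh16} for surfaces). So there is no ``paper's proof'' to compare against, and your submission should not be graded as a proof of the statement.

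That said, your write-up is an honest \emph{strategy outline}, not a claimed proof, and it is essentially consistent with how the known special cases are handled: construct a continuous extension $\Phi:X_{can}\to X_{\GKE}$ of the identity on $X_{can}\backslash S$ via a local H\"older/Lipschitz upper bound for $d_{\GKE}$ in terms of an ambient metric near $S$, then separately establish injectivity (a lower bound) so that $\Phi$ is a homeomorphism of compact spaces. You also correctly identify the genuine obstruction: when $1<\dim X_{can}<n$, the discriminant $S$ is a positive-dimensional stratified subvariety and no analogue of Zhang's local asymptotic estimate for $\omega_{\GKE}$ near $S$ is currently available, so the crucial upper bound is open. One caution on a secondary point: you invoke ``Song--Wang diameter bound and total boundedness'' to get compactness of $X_{\GKE}$, but Wang's diameter bound is for the general-type (non-collapsing) K\"ahler--Ricci flow, and in the collapsed setting the uniform diameter bound for $\omega(t)$ is itself conditional (in this paper it is \emph{derived} from Conjectures \ref{conj: id} and \ref{conj: Ricci}, see the end of Section 6) --- so you cannot use it as an input without circularity. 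If you wish to present this material, it should be framed as a discussion of the conjecture and what is missing, exactly as you have done in your final paragraph, rather than as a ``proof proposal.''
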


Some special cases have been verified. When $X$ is a smooth minimal model of general type, the identification is proved in \cite{So14} by using the partial $C^0$ estimate. When the Kodaira dimension is 1, $X_{can}$ is a curve with isolated singularities and the identification is proved in \cite{Zh17}; see also \cite{ZhZh16} for the special case when $X$ is a K\"ahler surface.

\begin{remark}
The construction of generalized K\"ahler-Einstein metrics depends only on the Calabi-Yau fibration structure of the minimal model. The identification of $X_{\GKE}$ with $X_{can}$ does not depend on the specified K\"ahler-Ricci flow in AMMP.
\end{remark}

Now, let $(X,\omega(t))$, $t\in[0,\infty)$, be a K\"ahler-Ricci flow (\ref{e: 610}) on $X$. In \cite{SoTi07, SoTi12} it is proved that $\omega(t)$ converges in the sense of currents to $\pi^*\omega_{\GKE}$; it is also proved that (\ref{e: 612})-(\ref{e: 614}) hold uniformly in \cite{SoTi07, SoTi12, SoTi16}. Moreover, due to Tosatti-Weinkove-Yang \cite{ToWeYa17} (see also \cite{To15}), we have the local $C^0$ convergence of the metric tensor (\ref{e:614.5}), so the K\"ahler-Ricci flow collapses the regular fibres in a uniform way.

Motivated by the Ricci boundedness assumption in our theorems we also make the following conjecture. Notice that when the Kodaira dimension is strictly less than $n$ and the generic fibres are not necessarily tori, the sectional curvature along the K\"ahler-Ricci flow can never be bounded even along a regular fibre (cf. Theorem 1.3 \cite{ToZh16}).

\begin{conjecture}\label{conj: Ricci}
For any compact subset $K\subset X_{can}\backslash S$, the Ricci curvature admits a uniform bound
\begin{equation}
|\Ric|\le C,\,\mbox{ on }\pi^{-1}(K)\times[0,\infty).
\end{equation}
\end{conjecture}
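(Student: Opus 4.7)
The strategy is to upgrade the $C^0_{loc}$ convergence $\omega(t)\to\pi^*\omega_{\GKE}$ on $X_{\reg}$ of Tosatti--Weinkove--Yang to smooth convergence in the collapsing sense --- smoothly in horizontal directions, and smoothly after the rescaling $e^t\omega(t)|_{X_s}$ in the fibre direction --- and then to read off the Ricci bound from a direct computation in fibration-adapted coordinates. The limiting horizontal metric $\omega_{\GKE}$ has Ricci $-\omega_{\GKE}+\omega_{\WP}$, which is smooth on $X_{can}\setminus S$, so its contribution is automatically bounded on $K$; the limiting rescaled fibre metric is the unique Ricci-flat K\"ahler metric in its class. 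Because the Ricci tensor viewed as a $(0,2)$-tensor is invariant under constant rescalings of the metric, the vertical component of $\Ric(\omega(t))$ stays bounded, and by O'Neill-type identities the mixed components are $O(1)$ as well provided the second fundamental forms of the fibres remain controlled.

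The analytic engine would be a parabolic complex Monge--Amp\`ere estimate against a \emph{semi-flat} reference form
\[
\omega^{sf}_t \,=\, \pi^*\omega_{\GKE}+e^{-t}\eta,
\]
where $\eta$ is a closed $(1,1)$-form whose restriction to each fibre $X_s$ with $s$ in a neighbourhood of $K$ represents the Ricci-flat K\"ahler metric in its class; these are produced smoothly in $s$ by Yau's theorem since the fibres over $K$ form a holomorphic submersion. Writing $\omega(t)=\omega^{sf}_t+\sqrt{-1}\partial\bar\partial\psi_t$, the K\"ahler--Ricci flow becomes a parabolic Monge--Amp\`ere equation for $\psi_t$ whose right-hand side is $L^\infty$-bounded by (\ref{e: 613}) and (\ref{e: 614}), while volume non-collapsing around compact subsets of $X_{\reg}$ is guaranteed either directly by (\ref{e: 613}) or, more quantitatively, by Theorem~\ref{relative noncollapsing}. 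A parabolic Chern--Lu / maximum principle computation on $\log\tr_{\omega(t)}\omega^{sf}_t-A\psi_t$, fed by the $C^0$ convergence (\ref{e:614.5}), should then yield the uniform quasi-isometry $C^{-1}\omega^{sf}_t\le\omega(t)\le C\,\omega^{sf}_t$ on $\pi^{-1}(K)\times[0,\infty)$. Evans--Krylov and parabolic Schauder bootstrap this to $C^\infty$ estimates in the horizontal directions; in the vertical direction one first rescales by $e^t$ to restore a non-degenerate fibre metric and applies the same Schauder theory to the rescaled equation, with the smooth family of Ricci-flat fibre metrics as input.

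The main obstacle is the vertical bootstrap when the generic fibres are not finite quotients of tori. In the toric case the Ricci-flat fibre metric is flat, the semi-flat form is canonical, and one has $\omega(t)-\omega^{sf}_t=O(e^{-t})$ in arbitrarily many derivatives --- this is precisely what powers the proofs of Fong--Zhang, Hein--Tosatti, and Tosatti--Zhang. For a general Calabi--Yau fibre one is forced to treat Yau's Ricci-flat family as a black box and to control its smooth dependence on $s$ across a compact part of the base effectively, i.e.\ to prove a quantitative collapsing statement for the fibred complex Monge--Amp\`ere equation. Establishing such an effective collapsing estimate --- or else bypassing it through a direct curvature argument built from Theorem~\ref{relative noncollapsing}, Perelman's Harnack, and the scalar bound (\ref{e: 614}) --- is the crux of Conjecture~\ref{conj: Ricci}, and is the reason it remains open beyond the toric setting.
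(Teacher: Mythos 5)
The statement you are addressing is labeled as a \emph{conjecture} in the paper (Conjecture~\ref{conj: Ricci}), not a theorem, and the authors supply no proof; the only rigorous Ricci-bound the paper invokes is (\ref{e: 615}), which is quoted from the literature (Fong--Zhang, Gross--Tosatti--Zhang, Hein--Tosatti, Tosatti--Zhang) and applies only when the generic fibres are finite quotients of tori. You have read the status of the statement correctly: what you wrote is an honest strategy sketch, and you explicitly flag at the end that the crux — an effective collapsing estimate, i.e.\ quantitative smooth control of the family of fibrewise Ricci-flat metrics in the non-toric case, or some substitute curvature argument — is precisely what is missing. That matches the paper's own posture, so there is no genuine gap to diagnose: you did not claim a proof of an open conjecture, and your outline of the semi-flat reference form, the parabolic Chern--Lu quasi-isometry $C^{-1}\omega^{sf}_t\le\omega(t)\le C\,\omega^{sf}_t$, and the Evans--Krylov/Schauder bootstrap after vertical rescaling is exactly the expected route and the one carried out in the toric setting.

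One technical caveat worth flagging in your sketch: the Chern--Lu maximum principle computation on $\log\tr_{\omega(t)}\omega^{sf}_t - A\psi_t$ requires control of the holomorphic bisectional curvature of the \emph{reference} metric, and $\omega^{sf}_t = \pi^*\omega_{\GKE}+e^{-t}\eta$ degenerates as $t\to\infty$, so this bound is not automatic. The standard workaround (as in Song--Tian and Tosatti--Weinkove--Yang) is to first prove $\omega(t)\le C\pi^*\omega_{\GKE}+Ce^{-t}\eta$ against a fixed non-degenerate reference, then extract the lower bound from the volume-form identity (\ref{e: 613}). This is manageable, but you should state which reference you are differentiating against, since the way you have written it the curvature input is unjustified. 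This does not change the bottom line: your assessment that the conjecture remains open outside the toric case, and your identification of the effective fibrewise Calabi estimate as the bottleneck, are both correct.
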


If both Conjecture \ref{conj: id} and Conjecture \ref{conj: Ricci} can be affirmed, then our arguments in Section 5 can be adapted to bounding the diameter of the K\"ahler-Ricci flow, namely,
\begin{equation}
\diam(X,\omega(t))\le D,\,\forall t\ge 0.
\end{equation}
Moreover, applying the relative volume comparison Theorem \ref{relative noncollapsing}, we can also show the Gromov-Hausdorff convergence of the K\"ahler-Ricci flow. Another problem is how to identify the limit space. According to AMMP again we have the following conjecture.

\begin{conjecture}
The K\"ahler-Ricci flow $(X,\omega(t))$ converges in the Gromov-Hausdorff topology to $(X_{\GKE},d_{\GKE})$.
\end{conjecture}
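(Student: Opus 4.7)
The plan is to adapt the argument of Section~5, where Theorem~\ref{KRF: minimal model} was established in the Kodaira dimension~$1$ case, to the general Kodaira dimension setting assumed by this conjecture. The three analytic inputs used there extend: the volume equivalence (\ref{e: 613}), the local $C^0$ convergence (\ref{e:614.5}) of Tosatti-Weinkove-Yang on compact subsets of $X_{\reg}=\pi^{-1}(X_{can}\backslash S)$, and the uniform scalar curvature bound (\ref{e: 614}). Conjecture~\ref{conj: Ricci} supplies the uniform Ricci bound needed to invoke Theorem~\ref{relative noncollapsing}, while Conjecture~\ref{conj: id} identifies the completion $X_{\GKE}$ with $X_{can}$ topologically and provides small-diameter control of tubular boundaries around the singular locus $S\subset X_{can}$.

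For a parameter $\epsilon>0$, let $U_\epsilon$ be an $\epsilon$-tubular neighborhood of $S$ in $X_{can}$ and set $\widetilde{U}_\epsilon=\pi^{-1}(U_\epsilon)$. From (\ref{e: 613}) and the fact that $S$ is a proper subvariety of $X_{can}$, one derives $\limsup_{t\to\infty}\vol_{\omega(t)}(\widetilde{U}_\epsilon)/\vol_{\omega(t)}(X)\le\delta(\epsilon)$ with $\delta(\epsilon)\to0$. Conjecture~\ref{conj: id} (together with a quantitative form extending \cite{Zh17}) should give $d_{GH}((X_{can}\backslash U_\epsilon,d_{\GKE}),(X_{\GKE},d_{\GKE}))\le\delta'(\epsilon)$ and $\diam(\partial U_\epsilon,d_{\GKE})\le\delta'(\epsilon)$, with $\delta'(\epsilon)\to0$. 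The $C^0$ convergence (\ref{e:614.5}) on $\overline{X_{can}\backslash U_\epsilon}$, arguing as in Lemma~3.10 of \cite{ZhZh16}, then yields for $t$ sufficiently large
\begin{equation}
d_{GH}\bigl((X\backslash\widetilde{U}_\epsilon,d_t),(X_{can}\backslash U_\epsilon,d_{\GKE})\bigr)\le\delta,\qquad \diam\bigl(\partial\widetilde{U}_\epsilon,d_t\bigr)\le 2\delta'.
\end{equation}

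The central step is to control the Hausdorff distance between $(X,d_t)$ and its subset $(X\backslash\widetilde{U}_\epsilon,d_t)$ via Theorem~\ref{relative noncollapsing}. Choose a regular reference point $\tilde{x}_0\in\pi^{-1}(x_0)$ with $x_0\in X_{can}\backslash S$ at fixed distance from $S$. Using Conjecture~\ref{conj: Ricci} together with (\ref{e: 613}), one can select a small $r_0>0$ so that $|\Ric|\le r_0^{-2}$ on $B_{\omega(t)}(\tilde{x}_0,r_0)\times[0,\infty)$ and $\vol_{\omega(t)}(B_{\omega(t)}(\tilde{x}_0,r_0))\ge C^{-1}\vol_{\omega(t)}(X)$. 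After rescaling to the unnormalized flow $\tilde{\omega}(\tilde{t})=(1+2\tilde{t})\,\omega\bigl(T+\log(1+2\tilde{t})\bigr)$ as in Section~5, if there is a ball $B_{\tilde{\omega}(r_0^2)}(\tilde{y},\rho)\subset\widetilde{U}_\epsilon$ touching $\partial\widetilde{U}_\epsilon$, the uniform ambient diameter bound from the previous step fixes a constant $A=A(\epsilon)$ such that $B_{\tilde{\omega}(r_0^2)}(\tilde{y},\rho)\subset B_{\tilde{\omega}(r_0^2)}(\tilde{x}_0,Ar_0)$, and Theorem~\ref{relative noncollapsing} forces
\begin{equation}
\kappa(2n,A)\cdot\frac{\rho^{2n}}{r_0^{2n}}\le\frac{\vol_{\tilde{\omega}(r_0^2)}(B_{\tilde{\omega}(r_0^2)}(\tilde{y},\rho))}{\vol_{\tilde{\omega}(0)}(B_{\tilde{\omega}(0)}(\tilde{x}_0,r_0))}\le C\cdot\delta.
\end{equation}
After rescaling back this gives $\rho\le C'\delta^{1/(2n)}$, so $\widetilde{U}_\epsilon$ lies in a $C'\delta^{1/(2n)}$-thickening of $\partial\widetilde{U}_\epsilon$ in $(X,d_t)$. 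Combining with the boundary diameter bound from the previous step and with Conjecture~\ref{conj: id}, the triangle inequality yields
\begin{equation}
d_{GH}\bigl((X,d_t),(X_{\GKE},d_{\GKE})\bigr)\le C\bigl(\delta+\delta'(\epsilon)+\delta^{1/(2n)}\bigr)
\end{equation}
for all $t\ge T(\epsilon)$, whence Gromov-Hausdorff convergence follows upon letting $\epsilon\to0$.

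The main obstacle I foresee is the analogue of the boundary diameter estimate (\ref{e: 632.5}), namely a uniform bound on $\diam(\partial\widetilde{U}_\epsilon,d_t)$ as $t\to\infty$. In Kodaira dimension~$1$ the singular set $S$ consists of isolated points and the components of $\partial U_\epsilon$ are small intrinsic circles on the curve $X_{can}$, whose preimages are compact smooth subvarieties on which the $C^0$ convergence can be invoked directly. In higher Kodaira dimension $S$ is a higher-dimensional subvariety with possibly several strata, and one needs a sharp structural result saying that tubular boundaries $\partial U_\epsilon$ shrink to $S$ in $(X_{\GKE},d_{\GKE})$; this is essentially the quantitative content of Conjecture~\ref{conj: id}. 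Once that boundary control is in hand the volume-comparison step absorbs the thickness of $\widetilde{U}_\epsilon$ automatically, and the remainder of the argument is a direct transplant of the computation in Section~5.
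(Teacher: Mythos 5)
The statement you are trying to prove is a \emph{conjecture} in the paper, not a theorem: the authors do not prove it, and in fact they explicitly flag the identification of the Gromov--Hausdorff limit with $(X_{\GKE},d_{\GKE})$ as ``another problem'' that remains open even granting Conjectures~\ref{conj: id} and~\ref{conj: Ricci}. Your proposal tries to do exactly what the paper says ``can be adapted'' from Section~5 (diameter bound and existence of a GH limit) and then to push one step further to identify that limit; it is precisely this last step where there is a genuine gap.

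The gap is the estimate $\diam(\partial U_\epsilon,d_{\GKE})\le\delta'(\epsilon)$ with $\delta'(\epsilon)\to 0$, which you take as an analogue of (\ref{e: 631.5}). In Kodaira dimension $1$ this holds because $S$ is a finite set of points and $\partial U_\epsilon$ is a disjoint union of small circles each shrinking to a single point. When the Kodaira dimension is $\ge 2$, the set $S\subset X_{can}$ of singular values is generically a positive-dimensional subvariety, and the tubular boundary $\partial U_\epsilon$ converges (in the Hausdorff sense) to $S$ itself. Consequently $\diam(\partial U_\epsilon,d_{\GKE})$ tends to $\diam(S,d_{\GKE})>0$ and does \emph{not} go to zero, regardless of any quantitative strengthening of Conjecture~\ref{conj: id}. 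This matters because the boundary diameter term is what controls the discrepancy between the intrinsic length metric on $X\backslash\widetilde U_\epsilon$ (which is what the $C^0$-convergence of \cite{ToWeYa17} compares to $d_{\GKE}$) and the ambient distance $d_t$ (which is what the relative volume comparison bounds); the rerouting of geodesics through $\widetilde U_\epsilon$ costs the boundary diameter. You partly anticipate this obstacle in your final paragraph, but you frame it as a quantitative refinement of Conjecture~\ref{conj: id}; in fact the claimed boundary diameter estimate is simply false in the higher Kodaira dimension case, so the Section~5 argument does not transplant and some genuinely new idea — e.g.\ a finer decomposition of $S$ into small pieces with controlled geometry, or a direct argument avoiding the intrinsic/ambient comparison — is needed. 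That missing idea is exactly the content left open by the conjecture.

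A secondary remark: your argument also assumes Conjectures~\ref{conj: id} and~\ref{conj: Ricci} themselves, so even if the boundary estimate were available the result would be conditional on two further unproved conjectures of the paper. That conditionality is consistent with how the paper discusses the matter, but it should be stated explicitly as a hypothesis rather than implicitly invoked.
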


\end{document}